\newtheorem{theorem}{Theorem}[section]
\newtheorem{rem}{Remark}[section]
\newtheorem{lem}{Lemma}[section]
\newtheorem{defn}{Definition}[section]
\newtheorem{conj}{Conjecture}[section]
\newcommand{\bz}{\mathbb{Z}}
\newcommand{\bg}{\mathbb{G}}
\newcommand{\bq}{\mathbb{Q}}
\newcommand{\bP}{\mathbf{Pic}}
\newcommand{\bd}{\mathbf{dim}}
\newcommand{\bF}{\mathbb{F}}
\newcommand{\bx}{\mathcal{X}}
\newcommand{\bl}{\mathcal{L}}
\newcommand{\by}{\mathcal{Y}}
\newcommand{\bs}{\mathbf{Spec}}
\newcommand{\mpp}{\mathcal{P}ic}
\newcommand{\mo}{\mathcal{O}}
\newcommand{\mg}{\mathcal{G}}
\newcommand{\mi}{\mathcal{I}}
\newcommand{\Br}{\mathbf{Br}}
\title{The period-index problem and the essential dimension of $\bg_m$ gerbes}
\author{Anningzhe Gao}
\date{} 
\begin{document}
\begin{abstract}
In this paper, we will consider the period-index problem of elliptic curves, and in particular we will give another concrete example of torsor with different period and index different with Cassel's example \cite{cassels1963arithmetic}. We will also define a number called 'generic index', which is closed related to the essential dimension of the Picard stack of algebraic curves with genus 1. Then we will show that the generic index is the same as the index.
\end{abstract}
\maketitle
\tableofcontents
\section{Introduction}
\subsection{Period-index problems}
Given an elliptic curve $E$ over some field $k$ with characteristic 0, let $C$ be a torsor of $E$ over $k$, then we know that $C$ can be regarded as an element in the first cohomology group $H^1(k,E)$. We may define the period of $C$ as the minimal positive integer $n$ such that $n[C]=0$ in $H^1(k,E)$. It can be shown that $n$ is always finite, denote by $per(C)$. We may also define the index of $C$, denote by $I(C)$, to be the greatest common divisor of the degrees of the closed points on $C$. Then a well known result is that $per(C)|I(C)$. A natural question is that whether these two numbers are the same. Unfortunately they are not. In \cite{cassels1963arithmetic}, Cassel first gave a counterexample with $per(C)=2$ but $I(C)=4$. On the other hand, we may define another value which lies between these two numbers: Given a torsor $C$ of $E$, we use $\mpp_{C/k}^0$ to denote its Picard stack of degree 0 component. Then $\mpp_{C/k}^0$ is a $\bg_m$ gerbe over $\bP_{C/k}^0\cong E$. Usually it is not a trivial gerbe when the field is not algebraically closed. We use $\bs\ K$ to denote the generic point of $E$, and consider the restriction of the Picard stack $\mpp_{C/k}^0$ to the generic point, say $\mpp_{C/k}^0|_K$. And we know the classes of $\bg_m$ gerbes over the field $K$ is the same as the set of similar classes of central simple algebras, i.e. the Brauer group $\Br(K)$, so we may consider the index of the Brauer class $\mpp_{C/k}^0|_K\in \Br(K)$. We call it the generic index of the torsor $C$, denoted by $i(C)$. We will see that these three numbers have the following relations:
$$per(C)|i(C)|I(C)|per(C)^2$$
It is natural to ask whether $per(C)=i(C)$ or $i(C)=I(C)$. In this paper, we will give positive answer to the second one:
\begin{theorem}\label{main}
Let $E/k$ be an elliptic curve over a field $k$ with characteristic 0. Then we always have $i(C)=I(C)$.

\end{theorem}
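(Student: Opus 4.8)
By the chain of divisibilities $per(C)\mid i(C)\mid I(C)\mid per(C)^2$ established above we already have $i(C)\mid I(C)$, so the content of the theorem is the divisibility $I(C)\mid i(C)$; equivalently, it suffices to exhibit on $C$ a zero-cycle of degree $i(C)$, since $I(C)$ divides the degree of every closed point of $C$. Write $K=k(E)$ with $E=\bP^0_{C/k}$, and $\beta=[\mpp^0_{C/k}|_K]\in\Br(K)$, so that $i(C)=\mathrm{ind}(\beta)$ by definition. The structural fact we exploit is that the $\bg_m$-gerbe $\mpp^0_{C/k}\to E$ is a fine moduli stack, and hence carries a universal (Poincar\'e) line bundle $\mathcal{P}$ on $\mpp^0_{C/k}\times_k C$ of $\bg_m$-weight $1$. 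Pairing against $\mathcal{P}$ defines a twisted Fourier--Mukai functor
\[
\Psi(-)=R\mathrm{pr}_{C,*}\!\big(\mathrm{pr}_E^*(-)\otimes\mathcal{P}^{-1}\big)\colon\ \mathbf{D}^b_{\beta^{-1}}(E)\longrightarrow\mathbf{D}^b(C),
\]
where the $\mathrm{pr}$'s are the two projections of $\mpp^0_{C/k}\times_k C$, and $\mathbf{D}^b_{\beta^{-1}}(E)$ is the derived category of $\beta^{-1}$-twisted sheaves on $E$, i.e. of weight-$(-1)$ sheaves on $\mpp^0_{C/k}$; the $\bg_m$-weights cancel, so $\Psi$ indeed lands in the untwisted derived category of $C$. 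After base change to $\bar k$ the class $\beta$ dies, $\mathcal{P}$ becomes the classical Poincar\'e bundle on $C_{\bar k}\times E_{\bar k}$ (under $E_{\bar k}\cong\widehat{C_{\bar k}}$), and $\Psi$ becomes Mukai's equivalence; by descent $\Psi$ is an equivalence over $k$ as well.

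Now choose a nonzero $\beta^{-1}$-twisted locally free sheaf $\mathcal{G}$ on $E$ of rank $\mathrm{ind}(\beta)=i(C)$: such a sheaf exists because the division algebra over $K$ with class $\beta^{-1}$ furnishes a twisted bundle of that rank over the generic point of $E$, which spreads out over an open subset and then extends (reflexively, hence locally freely, $E$ being a smooth curve) over all of $E$. Since $E$ has a $k$-rational point we may, by elementary modifications there, adjust the degree of $\mathcal{G}$, and using the theory of $\mu$-(semi)stability for twisted sheaves on a curve we may take $\mathcal{G}$ to be $\mu$-semistable of the slope for which the classical Poincar\'e transform of a semistable bundle is a torsion sheaf --- degree $0$ in the usual normalisation. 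Then $\Psi(\mathcal{G})_{\bar k}$ is the Fourier--Mukai transform of a $\mu$-semistable bundle of rank $i(C)$ and degree $0$ on the elliptic curve $E_{\bar k}$; such a bundle is an iterated extension of degree-$0$ line bundles, each of which transforms to a shifted skyscraper, so $\Psi(\mathcal{G})_{\bar k}$ is, up to shift, a torsion sheaf on $C_{\bar k}$ of length $i(C)$. By faithfully flat descent $\Psi(\mathcal{G})$ is concentrated in a single cohomological degree and is there a torsion sheaf on $C$ of length $i(C)$. Its reduced support is a zero-cycle $Z$ on $C$ with $0<\deg Z\le i(C)$, and $I(C)\mid\deg Z$; combined with $i(C)\mid I(C)$ this forces $I(C)\le\deg Z\le i(C)\le I(C)$, hence $I(C)=\deg Z=i(C)$, as claimed.

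\textbf{Main obstacle.} The delicate point is entirely contained in the choice and manipulation of the minimal twisted bundle $\mathcal{G}$: one must set up twisted Fourier--Mukai and $\mu$-stability for twisted sheaves over the non-closed base field $k$ (not merely over $\bar k$), establish the twisted analogue of the WIT/slope dictionary on an elliptic curve, and verify that the minimal-rank constraint $\operatorname{rk}\mathcal{G}=i(C)$ is compatible with taking $\mathcal{G}$ to be $\mu$-semistable of the required degree --- it is precisely here that the existence of a $k$-rational point on $E$, which makes the degree of a twisted bundle freely adjustable, enters. Everything else --- the identification $i(C)=\mathrm{ind}(\beta)$, the equality of $\mathrm{ind}(\beta)$ with the minimal rank of a twisted bundle on the smooth curve $E$, Mukai's equivalence and its descent, and the descent of torsion-ness, cohomological amplitude and length --- is routine. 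One could also route the same argument through the essential-dimension interpretation of $i(C)$ discussed earlier, but the geometric input, namely the twisted Poincar\'e transform, is unchanged.
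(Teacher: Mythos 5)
Your proposal is essentially correct and rests on the same central device as the paper --- pushing a rank-$i(C)$ object through a Fourier--Mukai-type kernel from (a twisted form of) $E$ to $C$ and reading off a divisor of degree $i(C)$ on $C$ --- but the two implementations differ, and the paper's is lighter. The paper replaces the twisted formalism by a concrete finite splitting cover $\pi\colon\Sigma\to E^\vee$ of degree $i(C)$, takes the resulting honest line bundle $L$ on $\Sigma\times C$, and computes (after base change to $\bar k$, where $Rq_*L\cong\Phi_{E^\vee}(\pi_*M)$) that $\deg Rq_*L=-\operatorname{rank}(\pi_*M)=-i(C)$; then $\det(Rq_*L)^\vee$ is a line bundle of degree $i(C)$ on $C$, and $I(C)\mid i(C)$ follows immediately. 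Notice that $\pi_*M$ \emph{is} a twisted sheaf of rank $i(C)$ on $E^\vee$, so the underlying object is the same as your $\mathcal{G}$; the point is that the paper never needs $\Phi(\pi_*M)$ to be concentrated in a single cohomological degree, since it only looks at the determinant of the complex. This bypasses entirely the semistability-of-twisted-sheaves machinery, the degree normalisation of $\mathcal{G}$ over $\bar k$, and the twisted WIT/slope dictionary, which you correctly flag as the delicate part of your approach. Your route is correct modulo filling those in (and indeed, at minimal rank $i(C)$ the twisted bundle $\mathcal{G}$ is automatically stable, since any nonzero proper subsheaf would be a twisted torsion-free sheaf of smaller positive rank, contradicting $\operatorname{ind}(\beta)=i(C)$ at the generic point), but the determinant trick renders all of it unnecessary.

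Two small points to fix in your write-up. First, the $\bg_m$-weight bookkeeping: $\mathcal{P}$ has weight $+1$ and a $\beta^{-1}$-twisted sheaf has weight $-1$, so $\mathrm{pr}^*(-)\otimes\mathcal{P}^{-1}$ has total weight $-2$, not $0$; you should pair $\beta^{-1}$-twisted sheaves against $\mathcal{P}$, or $\beta$-twisted sheaves against $\mathcal{P}^{-1}$. Second, the inequality $I(C)\le\deg Z$ requires $\deg Z>0$, i.e.\ $\Psi(\mathcal{G})\neq 0$, which you should record explicitly (it is immediate from base change to $\bar k$, where the transform is an equivalence); alternatively, take the full zero-cycle of the torsion sheaf with multiplicities, which has degree exactly $i(C)$ and avoids the inequality chain altogether.
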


Here is the reason why we consider the number $i(C)$: It is closed related to the essential dimension (See definition below) of the stack $\mpp_{C/k}^0$. So the essential dimension of the stack $\mpp_{C/k}^0$ is closed related to the period-index problems.

\subsection{Essential dimension of algebraic stacks}
Roughly speaking, essential dimension is a number which measures the minimal parameters we need to describe some algebraic object. The theory of essential dimension of algebraic stacks has been widely studied, see \cite{Vistoli2007essential} and \cite{brosnan2009essential} for details. We review the definition of the essential dimension here. Fix some base field $k$, we use $Field/k$ to denote the category with objects the field extensions of $k$ and morphisms the obvious inclusions. Denote $Set$ the category of sets. Given a functor $$F:Field/k\to Set$$
Pick an element $\eta\in F(L)$ for some field $L/k$, we say an intermediate field $k\subseteqq L'\subseteqq L$ a defining field of $\eta$ if there exists some element $\eta'\in F(L')$ and $\eta'=\eta\in F(L)$ via the inclusion map. Then the essential dimension of $\eta$, denoted by $ed_k(\eta)$, is defined to be:
$$ed_k(\eta):=min_{L'}tr.deg(L'/k)$$
where $L'$ runs over the defining field of $\eta$ and $tr.deg$ means transcendental degree. The essential dimension of the functor $F$, is defined to be
$$ed_kF=max_{\eta\in F(L)}ed_k(\eta)$$
where $L$ runs over all field extensions and $\eta$ runs over all elements in $F(L)$. When the base field $k$ is clear, we will just write $ed(\eta)$ and $edF$.

From the definition it is obvious that the essential dimension of a general functor $F$ can be infinite. However, an interesting case is the following: consider an algebraic stack $\bx/k$, we can construct a functor
\begin{align*}
F_\bx:& Field/k\to Set\\
& L/k\to\{isomorphism\ classes\ in\ \bx(L) \}
\end{align*}
and define $ed_k\bx=ed_kF_\bx$. Of course for general algebraic stacks, even for moduli stacks, its essential dimension can be infinite, for example see Section 10 of \cite{Vistoli2007essential}. Usually we will pick $\bx$ to be the moduli stack of some moduli problems or $B_kG$, the classifying stack of some algebraic group $G$. We will review the theorems they proved we need to use later, especially the genericity theorem (Theorem 4.1 in \cite{Vistoli2007essential}).

In this paper we will consider the essential dimension of the Picard stack $\mpp_{C/k}^0$ defined as above. It is not a Deligne-Mumford stack so in general we cannot apply the genericity theorem. But we will see in the $\bg_m$ gerbe case we have a similar result. The second theorem we will prove here is:

\begin{theorem}\label{secondmain}
Assume Conjecture \ref{conj}. Let $C/k$ be an algebraic curve of genus 1 over some field $k$ with characteristic 0. We denote $\mpp_{C/k}^0$ the degree 0 Picard stack. Then we have $ed_k\mpp_{C/k}^0=1$ if and only $C$ admits a $k$ rational point.
\end{theorem}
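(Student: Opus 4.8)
The plan is to analyze the essential dimension of the $\bg_m$-gerbe $\mpp_{C/k}^0$ over its coarse space $\bP_{C/k}^0$, which is the Jacobian elliptic curve $E=\mathrm{Jac}(C)$, by combining three inputs: (1) the general lower bound $ed_k\mpp_{C/k}^0\geq \dim E=1$ coming from the fact that the coarse space is a curve; (2) the relation $per(C)\mid i(C)\mid I(C)\mid per(C)^2$ together with Theorem \ref{main}, so that $i(C)=I(C)$; and (3) a dimension count for $\bg_m$-gerbes, presumably the content of Conjecture \ref{conj}, relating $ed_k\mpp_{C/k}^0$ to $\dim E$ plus a correction term that vanishes precisely when the generic Brauer class $\mpp_{C/k}^0|_K$ is trivial, i.e.\ when $i(C)=1$. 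I would first treat the easy direction: if $C$ has a $k$-rational point, then $C\cong E$, the torsor is trivial, $per(C)=I(C)=1$, hence $i(C)=1$, the gerbe $\mpp_{C/k}^0$ is a trivial $\bg_m$-gerbe (Picard stack of an elliptic curve with a chosen point, so $\mpp_{C/k}^0\cong E\times B_k\bg_m$), and one computes directly that $ed_k(E\times B_k\bg_m)=\dim E+ed_k B_k\bg_m=1+0=1$, using $ed_k B_k\bg_m=0$ by Hilbert 90.

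For the converse I would argue contrapositively: suppose $C$ has no $k$-rational point. Then $per(C)\geq 2$, so $I(C)\geq 2$, and by Theorem \ref{main} we get $i(C)=I(C)\geq 2$; in particular the restricted class $\mpp_{C/k}^0|_K\in\Br(K)$ is nontrivial, where $K=k(E)$. The key step is then to show that a nontrivial generic Brauer class forces $ed_k\mpp_{C/k}^0\geq 2$. Here is where I expect to invoke Conjecture \ref{conj}: the idea is that $ed_k\mpp_{C/k}^0 = \dim \bP_{C/k}^0 + ed_{K}(\mpp_{C/k}^0|_K)$, or at least an inequality $ed_k\mpp_{C/k}^0\geq 1+ed_K(\mpp_{C/k}^0|_K)$, analogous to the genericity theorem of \cite{Vistoli2007essential} adapted to $\bg_m$-gerbes; then since a nontrivial Brauer class over a field has essential dimension $\geq 1$ (one needs a transcendental parameter to even write down a nontrivial Azumaya algebra, or more precisely $ed_K$ of a nonzero class in $\Br(K)$ is positive), we conclude $ed_k\mpp_{C/k}^0\geq 2$, contradicting $ed_k\mpp_{C/k}^0=1$.

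The combinatorial bookkeeping — translating "$C$ has a $k$-point" $\iff$ "$per(C)=1$" $\iff$ (by the chain of divisibilities and Theorem \ref{main}) "$i(C)=1$" $\iff$ "$\mpp_{C/k}^0$ is a trivial $\bg_m$-gerbe over $E$" — is routine, as is the easy direction. The genuinely hard part is the gerbe analogue of the genericity theorem: $\mpp_{C/k}^0$ is an Artin stack, not Deligne–Mumford, so Theorem 4.1 of \cite{Vistoli2007essential} does not apply directly, and establishing that the generic essential dimension of a $\bg_m$-gerbe over a curve computes (or bounds below) the essential dimension of the whole gerbe is exactly what Conjecture \ref{conj} is presumably asserting. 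I would isolate this as the place where the conjecture is used, and otherwise reduce everything to the period–index dictionary and the triviality-of-gerbes computation. A subtle point to check carefully is that $ed_K$ of the nontrivial class is $\geq 1$ and not accidentally $0$; this should follow because any Brauer class defined over an algebraic (even finite) extension of $k$ inside $K$ would, after restriction, have to be unramified in a way incompatible with nontriviality over $K=k(E)$ unless it already came from $\Br(k)$, and one rules that out using that $E\cong\bP_{C/k}^0$ carries the obstruction class of the torsor $C$ itself.
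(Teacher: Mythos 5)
Your overall skeleton is the same as the paper's: reduce to the generic fibre via a genericity theorem for $\bg_m$-gerbes, then show that a nontrivial generic Brauer class forces $ed_K$ of the generic gerbe to be positive. But you have misidentified what Conjecture \ref{conj} actually asserts, and this misidentification leaks into the structure of the argument. Conjecture \ref{conj} is \emph{not} the genericity statement; it is the precise formula $ed_k\mg = p_1^{r_1}+\cdots+p_k^{r_k}-k+1$ for the essential dimension of a $\mu_n$-gerbe in terms of the prime factorization of the index of its Brauer class. The genericity statement for $\bg_m$-gerbes over an integral smooth base is Theorem \ref{genericity2}, which the paper proves unconditionally by reducing to a $\mu_n$-gerbe and applying the Deligne--Mumford genericity theorem of \cite{Vistoli2007essential} together with the index-one shift of Theorem \ref{gmgerbe}. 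So the genericity step is free; what actually needs Conjecture \ref{conj} is precisely your worry at the end, namely that ``$ed_K$ of a nontrivial $\bg_m$-gerbe over a field is at least $1$.'' The paper isolates that as Lemma \ref{tri}, which is a one-line consequence of the conjectured formula: $p_1^{r_1}+\cdots+p_l^{r_l}-l=0$ forces $l=0$, i.e.\ index $1$. Your sketch of a justification for this step (via unramifiedness considerations on algebraic subextensions of $K$) is both vaguer and not the mechanism the paper uses, and I do not see how to make it work without essentially re-deriving the conjectural lower bound.

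There is also a spurious dependency in your argument: you route through Theorem \ref{main} to pass from ``$per(C)\ge 2$'' to ``$\mpp_{C/k}^0|_K$ is nontrivial in $\Br(K)$.'' This is unnecessary. The class $[\mpp_{C/k}^0]\in\Br(E)$ is nontrivial as soon as $C$ has no $k$-point (Theorem \ref{trivialgerbe}), and $\Br(E)\to\Br(K)$ is injective (\cite{milne1980etale}, Example 3.2.22), so the restricted class is already nontrivial without any period--index comparison. The paper's proof runs the implication in the forward direction and needs neither Theorem \ref{main} nor the chain $per\mid i\mid I\mid per^2$: from $ed_k\mpp_{C/k}^0=1$ and Theorem \ref{genericity2} one gets $ed_K(\mpp_{C/k}^0|_K)=0$, then Lemma \ref{tri} gives triviality over $K$, injectivity of $\Br(E)\to\Br(K)$ gives triviality over $E$, and Theorem \ref{trivialgerbe} gives the $k$-point. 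Your easy direction ($C(k)\ne\emptyset \Rightarrow \mpp_{C/k}^0\cong E\times B_k\bg_m \Rightarrow ed=1$) is fine and is the implicit converse in the paper.
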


\subsection{Outline of the paper} In Section 2 and Section 3 we will review the basic material of the period-index problem and essential dimension of algebraic stacks. We will prove a result we need to compute the essential dimension of Picard stacks in Section 3. 

In Section 4 we will consider the Picard stacks of algebraic curves of genus 1. We first give a well known theorem about the when the Picard stack is a trivial gerbe. Then we will concentrate on the Brauer group of elliptic curves. We establish a geometric interpretation of $\Br(E)$ for an elliptic curve $E$. Then we review the description of the 2-torsion elements in $\Br(E)$ given by Chernousov and Guletski in \cite{chernousov20012}.

In Section 5 we will discuss the number $I(C)$ for 2-torsion elements of $\Br(E)$ and in Section 6 we will give the proof of Theorem \ref{main}, also we will give another interesting example for $per(C)<I(C)$.

Through the whole paper, we always assume the character of the base field is 0. We will use the properties of Brauer groups and algebraic stacks freely. We refer to \cite{gille2017central} and \cite{33333} for the details of Brauer groups, \cite{laumon2018champs} and \cite{olssonstack} for the definition and properties of algebraic stacks and gerbes.

\subsection{Acknowledgment} The author would like to thank his advisor Martin Olsson introducing this interesting topic. The author also appreciate Gunter Harder, Max Lieblich, Martin Olsson and Minseon Shin for helpful discussions.

\section{Period-index problems of elliptic curves}

Fix the base field $k$. Given an elliptic curve $E/k$, and a torsor $C$ of $E$. We already defined the  period $per(C)$ and index $I(C)$ in the introduction section. Since $C$ is a trivial torsor if and only if $C$ admits a $k$ rational point, we can see that $I(C)$ can also defined as the greatest common divisor of the degrees of field extensions $l/k$ such that $[C]=0\in H^1(l,E)$ under the obvious inclusion. Usually for higher dimension abelian varieties, we don't have a closed point on $C$ with degree exact $I(C)$. But Artin and Lang showed that this is true in elliptic curves case, see Section 2 of \cite{lang1958principal}. That is, in our case, we have a closed point $p\in C$ with degree of its residue field just $I(C)$. It is shown in \cite{clark2004period}, Corollary 10 that we always have
$$per(C)|I(C)|per(C)^2$$
$per(C)$ and $I(C)$ are not always the same, see \cite{cassels1963arithmetic}.

We also give our definition of generic index of a torsor here:
\begin{defn}
\normalfont Given an elliptic curve $E$ over $k$. For a torsor $C$ of $E$, we define the \emph{generic index} of $C$ to be the index of
$$\mpp_{C/k}^0|_K\in \Br(K)$$
as a Brauer class over $K$, where $K$ is the functional field of $E$. We denote this number by $i(C)$.
\end{defn}

We summarize the basic facts of these three values in the following lemma.
\begin{lem}\label{basic}
The numbers $per(C),\ i(C),\ I(C)$ have the following relations:\\
$$per(C)|i(C)$$
$$i(C)|I(C)$$
\end{lem}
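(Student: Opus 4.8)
The plan is to prove the two divisibilities separately, in both cases exploiting that $\mpp^{0}_{C/k}$ is a $\bg_m$-gerbe over $\bP^{0}_{C/k}$ and that there is a canonical identification $\bP^{0}_{C/k}\cong E$ induced by the action of $E$ on $C$. Throughout, write $K=k(E)$ and $\alpha:=[\mpp^{0}_{C/k}]\in\Br(E)$ for the class of the Picard gerbe, so that $i(C)=\mathrm{ind}(\alpha|_{K})$ by definition.

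\textbf{The divisibility $i(C)\mid I(C)$.} I would start from the Artin--Lang fact quoted above: there is a finite extension $L/k$ with $[L:k]=I(C)$ over which $C$ acquires a rational point, i.e.\ $[C]=0$ in $H^{1}(L,E)$. Since formation of the Picard stack commutes with base change, $\mpp^{0}_{C/k}\times_{k}L\cong\mpp^{0}_{C_{L}/L}$, and because $C_{L}\cong E_{L}$ carries an $L$-point this gerbe is trivial (a rational point rigidifies the Poincar\'e bundle, hence produces a section). Thus $\alpha|_{E_{L}}=0$ in $\Br(E_{L})$, and restricting to the generic point we get $\alpha|_{K\otimes_{k}L}=0$. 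Now $K\otimes_{k}L$ is a field — it is the function field of the integral curve $E_{L}$ — of degree $I(C)$ over $K$, so $\alpha|_{K}$ is split over an extension of degree $I(C)$, and since the index of a Brauer class divides the degree of any splitting field we conclude $i(C)=\mathrm{ind}(\alpha|_{K})\mid I(C)$.

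\textbf{The divisibility $per(C)\mid i(C)$.} Since $C_{\bar{k}}$ has a rational point, $\alpha$ dies in $\Br(E_{\bar{k}})$, so $\alpha$ lies in $\Br_{1}(E):=\ker\big(\Br(E)\to\Br(E_{\bar{k}})\big)$. The Leray spectral sequence for $E\to\bs\,k$, together with the point $O\in E(k)$, which splits off the subgroup $\Br(k)$, yields a canonical isomorphism $\Br_{1}(E)/\Br(k)\cong H^{1}(k,E)$, and I claim $\alpha$ maps to $\pm[C]$ under it. This is a descent computation: over $\bar{k}$ choose a Poincar\'e bundle on $C_{\bar{k}}\times E_{\bar{k}}$; the obstruction to Galois-equivariance is exactly the translation needed to make a chosen identification $C_{\bar{k}}\cong E_{\bar{k}}$ Galois-equivariant, which is precisely the cocycle representing $[C]$. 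Granting this, $\mathrm{ord}([C])$ divides $\mathrm{ord}(\alpha)$; moreover $\Br(E)\hookrightarrow\Br(K)$ by purity for the smooth curve $E$, so $\mathrm{ord}(\alpha)=\mathrm{ord}(\alpha|_{K})$, and $\mathrm{ord}(\alpha|_{K})$ divides $\mathrm{ind}(\alpha|_{K})=i(C)$ because period divides index for Brauer classes. As $per(C)=\mathrm{ord}([C])$, chaining these gives $per(C)\mid i(C)$.

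\textbf{Main obstacle.} Everything above is routine except the identification, in the second part, of the Picard-gerbe class $\alpha$ with $\pm[C]$ under the Leray isomorphism; that is the genuine content, and I would isolate it as a separate statement — it is in any case the "geometric interpretation of $\Br(E)$" needed later in the paper. The delicate points are to work with the canonical isomorphism $\bP^{0}_{C/k}\cong E$, to keep track of signs, and to set up the Poincar\'e-bundle descent so that the resulting class is exactly $\pm[C]$ rather than merely a nonzero multiple of it; once that is pinned down, both divisibilities follow immediately.
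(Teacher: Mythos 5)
Your proposal is correct and follows essentially the same route as the paper: for $per(C)\mid i(C)$ you identify $[\mpp^0_{C/k}]$ with $[C]$ via the Leray sequence (the paper isolates this as Theorem~\ref{decomp}), then use injectivity of $\Br(E)\hookrightarrow\Br(K)$ and the standard period-divides-index fact; for $i(C)\mid I(C)$ you restrict the gerbe along a splitting extension $L/k$ and observe that $K\otimes_k L$ is a field of degree $[L:k]$ over $K$ splitting $\alpha|_K$. The only superficial difference is that you invoke the Artin--Lang fact to pick $L$ of degree exactly $I(C)$, whereas the paper's proof lets $[L:k]$ range over all splitting degrees and takes the gcd — both are fine.
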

\begin{proof}
The inclusion $\Br(E)\to \Br(K)$ is injective (See \cite{milne1980etale}, Example 3.2.22), so we can see that $per(C)=per(\mpp_{C/k}^0|_K)$, hence we always have $per(C)|i(C)$. Also we know that suppose $C$ is a torsor splitting over some finite field extension $L$ with degree $d$, then we know $C$ admits a $L$ point, so $\mpp_{C_L/L}^0$ is a trivial gerbe over $E_L$, hence $K\otimes_kL$ is a splitting field of the Brauer class $\mpp_{C/k}^0|_K$, so $i(C)|L$, which implies $i(C)|I(C)$.
\end{proof}

\section{Essential dimension and algebraic stacks}

Given an algebraic stack $\bx$ over $k$, we define the essential dimension of the algebraic stack $ed\bx$ to be the essential dimension of the functor $F_\bx$ given by:
\begin{align*}
F_\bx:& Field/k\to Set\\
& L/k\to\{isomorphism\ classes\ in\ \bx(L) \}
\end{align*}

We will discuss the basic facts of essential dimension of algebraic stacks in this section. We refer to \cite{brosnan2009essential} and \cite{Vistoli2007essential} for more details. We first give some small lemmas.

\begin{lem}(\cite{Vistoli2007essential}, Prop. 2.12)
For an algebraic stack $\bx$ over $k$, suppose $L/k$ is a field extension, then we always have
$$ed_L\bx_L\leq ed_k\bx$$
\end{lem}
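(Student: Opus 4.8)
The plan is to reduce everything to the elementary behaviour of transcendence degree under composita, after identifying the two functors involved on fields over $L$. First I would record that for any field extension $M/L$ there is a canonical bijection $F_{\bx_L}(M)\cong F_\bx(M)$ of isomorphism classes: since $\bx_L=\bx\times_{\bs\ k}\bs\ L$ and the structure morphism $\bs\ M\to\bs\ L$ (hence $\bs\ M\to\bs\ k$) is part of the data of an $L$-field, the universal property of the $2$-fibre product shows that giving an $M$-point of $\bx_L$ over $L$ is the same as giving an $M$-point of $\bx$ over $k$. In particular every $\eta\in F_{\bx_L}(M)$ is canonically an element of $F_\bx(M)$, and the restriction maps match up under this identification.

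Next, fix an $L$-field $M$ and an element $\eta\in F_{\bx_L}(M)$, viewed as an element of $F_\bx(M)$. By definition of $ed_k\bx$ there is an intermediate field $k\subseteqq M'\subseteqq M$ together with $\eta'\in F_\bx(M')$ restricting to $\eta$ and with $tr.deg(M'/k)=ed_k(\eta)\leq ed_k\bx$. Since $M'$ need not contain $L$, I would pass to the compositum $LM'$ formed inside $M$; pushing $\eta'$ forward along $M'\hookrightarrow LM'$ gives, by functoriality, an element of $F_\bx(LM')=F_{\bx_L}(LM')$ restricting to $\eta$, so $LM'$ is a defining field of $\eta$ over $L$ and $ed_L(\eta)\leq tr.deg(LM'/L)$. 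Finally one uses that a transcendence basis $T$ of $M'/k$ makes $M'$ algebraic over $k(T)$, hence $LM'$ algebraic over $L(T)$, so $tr.deg(LM'/L)\leq |T|=tr.deg(M'/k)$. Chaining the inequalities gives $ed_L(\eta)\leq ed_k(\eta)\leq ed_k\bx$, and taking the supremum over all $M/L$ and all $\eta$ yields $ed_L\bx_L\leq ed_k\bx$ (trivially so if $ed_k\bx=\infty$).

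There is no real obstacle here: the content is entirely formal. The only point deserving a line of care is the identification $F_{\bx_L}(M)\cong F_\bx(M)$ for $M/L$, i.e. that the $2$-fibre product $\bx(M)\times_{(\bs\ k)(M)}(\bs\ L)(M)$ collapses to $\bx(M)$ because an $L$-field already carries its $k$-structure; everything after that is the standard subadditivity of transcendence degree along a compositum.
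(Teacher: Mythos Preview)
Your argument is correct. The paper does not supply its own proof of this lemma; it simply quotes the statement and cites \cite{Vistoli2007essential}, Prop.~2.12. What you have written is precisely the standard argument given in that reference: identify $F_{\bx_L}(M)$ with $F_\bx(M)$ for any $L$-field $M$ via the $2$-fibre product, descend $\eta$ to some $k$-subfield $M'\subseteq M$ realizing $ed_k(\eta)$, replace $M'$ by the compositum $LM'$ inside $M$, and bound $tr.deg(LM'/L)\leq tr.deg(M'/k)$ by pushing a transcendence basis through. There is nothing to add or correct.
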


\begin{lem}(\cite{Vistoli2007essential}, Prop. 2.15)
Given an algebraic space $X$ over $k$ locally of finite type. We can define its essential dimension by considering it as a stack. Then we have
$$ed\bx=\bd X$$
\end{lem}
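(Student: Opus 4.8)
The plan is to prove the two inequalities $ed\,X \le \mathbf{dim}\,X$ and $ed\,X \ge \mathbf{dim}\,X$ separately. For the easy direction, $ed\,X \le \mathbf{dim}\,X$: given any field extension $L/k$ and any point $\eta \in X(L)$, I would factor $\eta$ through a locally closed affine subscheme $U = \mathbf{Spec}\,R \hookrightarrow X$ of finite type over $k$ whose dimension is at most $\mathbf{dim}\,X$ (possible since $X$ is locally of finite type, so admits an étale cover by affine schemes of finite type, and the image of $\eta$ is a single point). The ring map $R \to L$ factors through the residue field $\kappa(\mathfrak{p})$ of the corresponding prime, and $\mathrm{tr.deg}(\kappa(\mathfrak{p})/k) \le \dim R \le \mathbf{dim}\,X$; hence $\eta$ descends to the field $\kappa(\mathfrak{p})$, giving $ed_k(\eta) \le \mathbf{dim}\,X$. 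Taking the supremum over all $\eta$ yields the bound.

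For the reverse inequality $ed\,X \ge \mathbf{dim}\,X$, the idea is to produce a single point witnessing the dimension. Let $d = \mathbf{dim}\,X$ and pick an irreducible component (or an open affine $U$ of finite type over $k$ meeting it) of dimension $d$; let $K$ be its function field, so $\mathrm{tr.deg}(K/k) = d$, and let $\eta \in X(K)$ be the canonical generic point. I would then argue that $\eta$ has no defining field of transcendence degree smaller than $d$. Suppose $\eta$ descends to $\eta' \in X(L')$ for some $k \subseteq L' \subseteq K$. Because $X$ is an algebraic space (in particular its diagonal is quasi-compact and separated, so isomorphisms between maps to $X$ are constrained), the map $\mathbf{Spec}\,K \to X$ factoring through $\mathbf{Spec}\,L'$ forces the image point of $\eta'$ to be the generic point of the chosen component, and the induced map on residue fields realizes $K$ as an extension of a field contained in the residue field at that point; since that residue field is exactly $K$ itself (for the generic point of a reduced irreducible scheme) and the composite $L' \hookrightarrow K \hookrightarrow K$ is the given inclusion, one deduces $L' = K$ up to the fact that $K$ is already generated over $L'$ by nothing — i.e. $\mathrm{tr.deg}(L'/k) = d$. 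Hence $ed_k(\eta) = d$.

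The main obstacle will be the reverse inequality, specifically making precise that a map $\mathbf{Spec}\,K \to X$ to an algebraic space "remembers its image point" and cannot be defined over a field of smaller transcendence degree. For schemes this is immediate from the scheme structure, but for algebraic spaces one must work étale-locally: pull back along an étale atlas $V \to X$ with $V$ a scheme, lift $\eta$ (possibly after a finite separable extension, which does not change transcendence degree) to a point of $V$, and apply the scheme case there, then descend the conclusion. One should be careful that passing to the étale atlas can replace $K$ by a finite separable extension $K'$; since $\mathrm{tr.deg}(K'/k) = \mathrm{tr.deg}(K/k)$ this is harmless, but it must be noted. I expect the write-up to invoke standard facts about algebraic spaces (existence of étale atlases, behaviour of residue fields under étale morphisms) rather than anything deep, so the result is essentially a bookkeeping argument once the atlas is introduced; this is presumably why the paper cites it as Prop. 2.15 of \cite{Vistoli2007essential} rather than reproving it.
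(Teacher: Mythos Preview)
The paper does not give its own proof of this lemma; it simply states the result and cites Proposition~2.15 of \cite{Vistoli2007essential}. There is therefore nothing to compare your argument against within the paper itself. Your sketch is the standard proof: the inequality $ed\,X\le \bd X$ follows because every field-valued point factors through the residue field of its image, and the inequality $ed\,X\ge \bd X$ is witnessed by the generic point of a maximal-dimensional component, since any descent to a subfield $L'\subseteq K$ must have the same image point and hence $\kappa(x)=K$ embeds into $L'$, forcing $L'=K$. Your remark about handling algebraic spaces by lifting along an \'etale atlas (at the cost of a harmless finite separable extension) is the right way to reduce to the scheme case, and is exactly the kind of bookkeeping the cited reference carries out.
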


As we pointed in the introduction, usually the essential dimension of a general algebraic stack can be infinity. So we concentrate on some special algebraic stacks. We know that algebraic stacks come naturally as solutions of moduli problems, so moduli stacks are the things we will always focus on. In this paper we will consider two types: Deligne-Mumford stacks (DM stacks) and $\bg_m$ gerbes.

\subsection{The essential dimension of Deligne-Mumford stacks and genericity theorem}

Given an algebraic stack $\bx$ over $k$, recall the inertia stack $\mi_\bx\to \bx$ is the fiber product
$$\bx\times_{\bx\times\bx}\bx$$
mapping to $\bx$ via the second the projection. We say $\bx$ has finite inertia if $\mi_\bx\to\bx$ is finite. By the theorem proved by Keel and Mori, a DM stack $\bx$ over a field $k$ locally of finite type with finite inertia has a coarse moduli space $X$, and the morphism $\bx\to X$ is proper. In \cite{Vistoli2007essential}, Brosnan, Reichstein and Vistoli proved a theorem called $genericity\ theorem$ which is a powerful tool for computing essential dimension of some nice DM stacks. We now state it here:
\begin{theorem}\label{genericity}(\cite{Vistoli2007essential}, Theorem 4.1)
Let $k$ be a field of characteristic 0, $\bx$ a smooth DM stack with finite inertia, locally of finite type over $k$. Let $X$ be the coarse moduli space, $K$ the functional field of $X$. Denote $\bx_K$ the fiber product $\bx\times_X\bs K$, then we have
$$ed_k\bx=\bd X+ed_K\bx_K$$
\end{theorem}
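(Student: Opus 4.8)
The plan is to reduce the computation of $ed_k\bx$ to what happens over the generic point of $X$, exploiting the fact that in characteristic $0$ a smooth DM stack with finite inertia is generically a gerbe over its coarse space. First I would make the standard reductions: since essential dimension, $\bd X$, and the formation of $\bx_K$ all decompose over the irreducible components of $X$, and $ed_k$ of a disjoint-type union is the maximum of the pieces, it suffices to treat the case where $X$ is integral with function field $K$. I record the two inequalities to be proved separately, $ed_k\bx\ge \bd X+ed_K\bx_K$ and $ed_k\bx\le \bd X+ed_K\bx_K$.

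The first structural input is the \textbf{generic gerbe lemma}: because $\mathrm{char}\,k=0$ and $\mi_\bx\to\bx$ is finite, generic flatness and generic smoothness let one spread the inertia out to a finite étale group scheme $G$ over a dense open $U\subseteq X$, and over $U$ the morphism $\bx\to X$ becomes a gerbe (in the fppf topology) banded by $G$; in particular $\bx_K$ is a $G_K$-gerbe over $\mathrm{Spec}\,K$. For the \textbf{upper bound} I would take $\xi\in\bx(L)$, and, using that $\bx$ is locally of finite type, replace $L$ by a finitely generated subextension over which $\xi$ is defined; thus $\xi$ extends to an object $\xi_V$ on an integral $k$-scheme $V$ of finite type with function field $L$, equipped with a morphism $V\to X$. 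Let $x\in X$ be the image of the generic point of $V$, so $\mathrm{tr.deg}_k\kappa(x)=\bd\overline{\{x\}}\le\bd X$. If $x\in U$, shrink $V$ so its image lies in $U$; then viewing $\xi$ as an object of the fibre $\bx_x=\bx\times_X\bs\,\kappa(x)$ over $L/\kappa(x)$, it descends to an intermediate field $L'$ with $\mathrm{tr.deg}_{\kappa(x)}L'\le ed_{\kappa(x)}\bx_x$, hence $\mathrm{tr.deg}_kL'\le \bd X+ed_{\kappa(x)}\bx_x$. It then remains to compare fibres: for $x\in U$ one has $ed_{\kappa(x)}\bx_x\le ed_K\bx_K$, which I would prove by spreading out a versal object of $\bx_K$ over a neighbourhood of the generic point of $X$ and restricting it to $x$. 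The points $x\notin U$ are handled by Noetherian induction on $\bd X$: $\bx\times_X(X\setminus U)_{\mathrm{red}}$ is again a DM stack with finite inertia, locally of finite type over $k$, whose coarse space has strictly smaller dimension, so by induction objects lying over it descend to transcendence degree $\le\bd(X\setminus U)+ed_{K'}(\text{its generic fibre})\le\bd X+ed_K\bx_K$, the last step again via the spreading-out comparison.

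For the \textbf{lower bound} I would start from a versal object $\zeta$ of $\bx_K$ over an extension $K'/K$ with $\mathrm{tr.deg}_KK'=ed_K\bx_K$; by generic flatness $\zeta$ extends to an object over an integral $k$-scheme $W$ that dominates $X$ with generic point of $W$ mapping to $\eta$, and then $\bd W=\bd X+\mathrm{tr.deg}_KK'=\bd X+ed_K\bx_K$. The point is that the generic object of this family has essential dimension exactly $\bd W$: if it descended to a subfield of smaller transcendence degree, one could base change along $\eta$ and contradict the versality-induced lower bound $ed_K\bx_K$ for $\bx_K$, using that $W\to X$ is dominant so a defining field of the generic object maps dominantly to $X$ and therefore contributes the full $\bd X$ transcendence degrees on top of the fibral ones. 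I expect the genuine obstacle to be precisely this last bookkeeping — verifying that the transcendence degrees coming from $X$ and from the fibres genuinely add rather than overlap — and it is here that characteristic $0$, smoothness of $\bx$, and finiteness of the inertia are all needed: the generic gerbe structure makes the fibral contribution rigid, but turning "versal over $\bx_K$" into a clean lower bound for $ed_k\bx$ requires carefully tracking defining fields through the two-step fibration $\bx\to X\to\bs\,k$.
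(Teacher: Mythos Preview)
The paper does not prove this theorem at all: it is quoted verbatim from \cite{Vistoli2007essential}, Theorem 4.1, and used as a black box (together with its extension to non-finite inertia from \cite{brosnan2009essential}). So there is no ``paper's own proof'' to compare your proposal against.

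That said, your outline follows the genuine Brosnan--Reichstein--Vistoli strategy (reduction to the integral case, generic gerbe structure, Noetherian induction on the complement), but the upper-bound step contains a real gap. You assert that for $x\in U$ one has $ed_{\kappa(x)}\bx_x\le ed_K\bx_K$ by ``spreading out a versal object of $\bx_K$ over a neighbourhood of the generic point and restricting it to $x$''. This does not work: restriction of a versal object to a special fibre need not remain versal, so you cannot bound $ed_{\kappa(x)}\bx_x$ this way. What BRV actually prove is the combined inequality $ed_{\kappa(x)}\bx_x+\mathrm{tr.deg}_k\kappa(x)\le \bd X+ed_K\bx_K$, and they do so via a fibre-dimension/semicontinuity argument for representable finite-type morphisms of stacks (together with the local structure of DM stacks as finite group quotients), not by specialising versal objects. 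The same issue recurs in your Noetherian induction step, where the final ``$\le \bd X+ed_K\bx_K$'' is asserted but depends on exactly the comparison you have not justified. Your lower-bound sketch is closer to correct, and you rightly flag the transcendence-degree bookkeeping as the delicate point; in BRV this is handled cleanly once one knows the map to $X$ is dominant, so a field of definition for the generic object must contain $K$.
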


This means that for DM stacks, to compute its essential dimension, we just need to consider the generic object. In lots of cases, this generic fiber $\bx_K$ will be some gerbe, so we will consider the essential dimension of gerbes next.

\begin{rem}
The Theorem \ref{genericity} can be generalized to DM stacks whose inertia is not finite, see \cite{brosnan2009essential} Section 6 for more details.
\end{rem}

\subsection{Essential dimension of gerbes}
We will focus on $\bg_m$ gerbes and $\mu_n$ gerbes in this section. Since we always assume the characteristic is 0, so $n$ is always invertible.

We fix a base field $k$. $\mu_n$ gerbes over $\bs k$ are classified by $H^2(k,\mu_n)$, $\bg_m$ gerbes are classified by $H^2(k,\bg_m)$. We have a natural injection
$$H^2(k,\mu_n)\to H^2(k,\bg_m)$$
under the language of Brauer groups, this is just the inclusion of $n$ torsion parts:
$$\Br(k)[n]\to \Br(k)$$
where for an abelian group $A$, we use $A[n]$ to denote it $n$ torsion part. The theory of Brauer groups of fields tells us $\Br(k)$ is a torsion group, that is $\Br(k)$ is the union of $\Br(k)[n]$ for $n$ runs over all positive integers. We will use this fact later.
 
Given $\mg\to \bs k$ a $\mu_n$ gerbe. $\mg$ corresponds to a Brauer class in $\Br(k)[n]$, let's use $\alpha$ to denote the class. Then we have the index of $\alpha$, we use $ind(\alpha)$ to denote it. It is a general philosophy that the essential dimension of a $\mu_n$ gerbe is closed related to the index of its Brauer class. In \cite{Canonical}, the authors conjectured the following: 
\begin{conj}\label{conj}(\cite{Canonical})
Given a $\mu_n$ gerbe $\mg$ over $\bs k$. If the corresponding Brauer class $\alpha$ has index $m$ with prime decomposition $m=p_1^{r_1}...p_k^{r_k}$, then we have
$$ed_k\mg=p_1^{r_1}+...+p_k^{r_k}-k+1$$
\end{conj}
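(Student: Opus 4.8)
The plan is to establish the two inequalities separately, in each case organising the argument around the primary decomposition $\alpha=\alpha_1+\dots+\alpha_k$ of the Brauer class in $\Br(k)$, where $ind(\alpha_i)=p_i^{r_i}$; correspondingly the band splits, $\mu_n=\bigoplus_i\mu_{n_i}$ with $n_i$ the $p_i$-part of $n$ and $\alpha_i\in H^2(k,\mu_{n_i})$, and I write $\mg_i$ for the associated $\mu_{n_i}$-gerbe. For the upper bound $ed_k\mg\le\sum_i p_i^{r_i}-k+1$ the point is that $F_\mg$ decomposes as the product $\prod_i F_{\mg_i}$, so that while each $\mg_i$ separately costs $p_i^{r_i}=(p_i^{r_i}-1)+1$, the several $\mu_{n_i}$-torsor contributions assemble into a \emph{single} class in $H^1(L,\mu_n)=L^\ast/L^{\ast n}$, which over any field $L$ is already defined over a subfield of transcendence degree at most one. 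Concretely, given an object over $L$ I would split each $\alpha_i$ over the function field of the Severi--Brauer variety $SB(D_i)$ of the degree-$p_i^{r_i}$ division algebra $D_i$ representing $\alpha_i$ (of dimension $p_i^{r_i}-1$) and adjoin one further transcendental recording the combined torsor class, obtaining a field of definition of transcendence degree at most $\sum_i(p_i^{r_i}-1)+1=\sum_i p_i^{r_i}-k+1$. Packaging this into an honest versal family over an open subscheme of $\prod_i SB(D_i)\times\mathbb{A}^1$ is then routine bookkeeping, and I expect this half to be unproblematic.

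For the lower bound $ed_k\mg\ge\sum_i p_i^{r_i}-k+1$, the first move is the standard $p$-localisation: for each $i$, base change to a prime-to-$p_i$ closure $k^{(p_i)}$ of $k$, over which every $\alpha_j$ with $j\ne i$ dies while $\alpha_i$ keeps its index $p_i^{r_i}$, so that $ed_k\mg\ge ed_{k^{(p_i)}}\mg_{k^{(p_i)}}\ge ed_{k^{(p_i)}}\mg_i$ by the base-change inequality for essential dimension together with the product decomposition above. This reduces the prime-power case to proving $ed_{k^{(p_i)}}\mg_i=cdim(\alpha_i)+1$ — where $cdim$ is the canonical dimension of the class, i.e. of $SB(D_i)$, and where the ``$+1$'' over the associated $\bg_m$-gerbe is justified by the same splitting-field-plus-one-parameter argument used for the upper bound — together with $cdim(\alpha_i)=p_i^{r_i}-1$, which is Karpenko's incompressibility theorem for $SB(D_i)$, proved via Rost's degree formula and Steenrod operations in mod-$p_i$ Chow theory. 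To recover the full statement one must then upgrade the automatic ``$\max_i$'' coming from the different closures $k^{(p_i)}$ to the ``$\sum_i$'' in the conjecture; for this I would pass to a base field over which the $\alpha_i$ are independent generic classes, argue that a common field of definition of the generic object is forced to split every $\alpha_i$ simultaneously as well as to trivialise a generic $\mu_n$-torsor, and run one joint incompressibility argument on $\prod_i SB(D_i)$ that keeps track of the $p_i$-primary torsion in the Chow ring of the product for each $i$ in turn.

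The step I expect to be the main obstacle — and the reason the statement is still a conjecture — is exactly this additivity across distinct primes: the prime-power incompressibility bound is available, but there is no general ``additivity of canonical dimension over coprime summands'', so the degree-formula and Steenrod computations for the various $SB(D_i)$ have to be carried out together on the product while preventing the $p_i$-primary phenomena for different $i$ from interfering, and one must also check that making the $\mu_n$-torsor class generic is genuinely independent of all the splittings rather than being absorbed into one of them. In the single-prime case ($k=1$) everything collapses — the formula reads $ed_k\mg=p_1^{r_1}$ — and the argument above should go through unconditionally; the real content of the conjecture is the assertion that, for coprime index, these contributions add.
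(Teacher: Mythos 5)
The statement you were asked to prove is labelled a \emph{conjecture} in the paper, attributed to \cite{Canonical}, and the paper does not prove it. What the paper records is exactly what you identify as the ``known pieces'': the upper bound $ed_k\mg\le p_1^{r_1}+\dots+p_k^{r_k}-k+1$ holds unconditionally, and equality holds when $k=1$ (prime-power index), this being \cite{Vistoli2007essential}, Theorem~5.4. Your sketch of the upper bound via a versal family over an open subscheme of $\prod_i SB(D_i)\times\mathbb{A}^1$ is the right picture — splitting each $\alpha_i$ generically costs $p_i^{r_i}-1$ parameters and the residual $H^1(L,\mu_n)\cong L^{\ast}/L^{\ast n}$ torsor data costs one more — and your reduction of the prime-power lower bound to Karpenko's incompressibility of the Severi--Brauer variety, after passing to a prime-to-$p$ closure, is the standard argument behind Theorem~5.4.

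You are also correct about where the argument stops being a proof. Prime-to-$p_i$ localisation only yields $ed_k\mg\ge\max_i p_i^{r_i}$, and there is no known mechanism to make the contributions from distinct primes add. The ``joint incompressibility argument on $\prod_i SB(D_i)$'' that would keep the different $p_i$-primary phenomena from interfering is precisely what is missing from the literature, and your closing paragraph — that the real content of the conjecture is additivity of these contributions over coprime index — is exactly the open problem. In short, there is no proof in the paper for your attempt to be compared against; the paper uses Conjecture~\ref{conj} as a hypothesis (in Lemma~\ref{tri}, Theorem~\ref{essentialofPicard}, and Theorem~\ref{secondmain}) while proving the main period-index result, Theorem~\ref{main}, unconditionally.
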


This has been proved in \cite{Vistoli2007essential}, Theorem 5.4 when $k=1$. That is the case when $m$ is a prime power. Usually we always have
$$ed_k\mg\leq p_1^{r_1}+...+p_k^{r_k}-k+1$$

The case of $\bg_m$ gerbes is related to $\mu_n$ gerbes. 
\begin{theorem}\label{gmgerbe}(\cite{brosnan2009essential}, Theorem 4.1)

Suppose $\bx$ is a $\mu_n$ gerbe, $\by$ is the $\bg_m$ gerbe corresponding to $\bx$ under the inclusion
$$H^2(k,\mu_n)\to H^2(k,\bg_m)$$
Then we have
$$ed_k\bx=ed_k\by+1$$
\end{theorem}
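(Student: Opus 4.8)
The plan is to build everything on the canonical morphism of stacks $\pi\colon\bx\to\by$ induced by the inclusion of bands $\mu_n\hookrightarrow\bg_m$; concretely $\by$ is the contracted product $\bx\wedge^{\mu_n}\bg_m$ and $\pi$ is the tautological functor. Since $\mu_n\to\bg_m$ is a monomorphism of group schemes, $\pi$ is representable, and by the Kummer sequence $1\to\mu_n\to\bg_m\to\bg_m\to1$ (the last arrow the $n$-th power) its fibres are torsors under $\bg_m/\mu_n\cong\bg_m$; in other words $\pi$ is a $\bg_m$-torsor. I will use two consequences of this repeatedly. First, $\bg_m$ is special --- Hilbert's Theorem 90 gives $H^1(L,\bg_m)=0$ for every field $L$ --- so for every extension $L/k$ the induced map $F_\bx(L)\to F_\by(L)$ is surjective, and the fibre of $\pi$ above an object of $\by$ over $L$ is a trivial $\bg_m$-torsor over $\bs L$, hence isomorphic to $\bg_{m,L}$. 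Second, by Giraud's theory of banded gerbes $F_\bx(L)$, when non-empty, is a torsor under $H^1(L,\mu_n)$, while $F_\by(L)$ is a torsor under $H^1(L,\bg_m)=0$ and hence has at most one element; so on $L$-points $\pi$ collapses the $H^1(L,\mu_n)$-torsor $F_\bx(L)$ onto the point $F_\by(L)$.

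With this in hand, the upper bound $ed_k\bx\le ed_k\by+1$ is the easy half. Given a field $L/k$ and $\xi\in F_\bx(L)$, I would first compress $\pi(\xi)\in F_\by(L)$, obtaining an object $\eta_0\in F_\by(L_0)$ over a subfield $L_0\subseteq L$ with $tr.deg(L_0/k)\le ed_k\by$ and $\eta_0|_L\cong\pi(\xi)$. The fibre $\bx_{\eta_0}:=\bx\times_{\by,\eta_0}\bs L_0$ is a $\bg_m$-torsor over $\bs L_0$, hence $\cong\bg_{m,L_0}$ by Hilbert 90. The pair consisting of $\xi$ and the isomorphism $\pi(\xi)\cong\eta_0|_L$ is an $L$-point of $\bx_{\eta_0}\cong\bg_{m,L_0}$; it factors through the residue field $\kappa$ of the scheme-theoretic point it meets, which is either $L_0$ or a simple transcendental extension of $L_0$, whence $tr.deg(\kappa/k)\le ed_k\by+1$, and $\kappa$ is a subfield of $L$ over which $\xi$ is defined. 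Taking the supremum over $\xi$ gives the stated bound. The reverse trivial inequality $ed_k\bx\ge ed_k\by$ is immediate from surjectivity of $\pi$ on isomorphism classes: lift an object realising $ed_k\by$ to $\bx$, and observe that any descent of the lift descends its image.

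The real content is the lower bound $ed_k\bx\ge ed_k\by+1$, which I would establish by exhibiting an incompressible object. Choose a field $L_0/k$ with $tr.deg(L_0/k)=ed_k\by$ over which the Brauer class of $\by$ splits, but over no subfield of smaller transcendence degree (such an $L_0$ exists because $ed_k\by$ is finite, hence realised by some object); pick any $\xi_0\in F_\bx(L_0)$, put $E_1=L_0(t)$, and let $\xi_1\in F_\bx(E_1)$ be the translate of $\xi_0|_{E_1}$ by the image of $t$ under the Kummer boundary $E_1^\times\to H^1(E_1,\mu_n)$ --- equivalently, $\xi_1$ is the generic object along the fibre $\bx_{\eta_0}\cong\bg_{m,L_0}$ of $\pi$. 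Then $tr.deg(E_1/k)=ed_k\by+1$, and I claim $ed_k(\xi_1)=ed_k\by+1$. Indeed, a purported descent $\xi'\in F_\bx(E')$ with $E'\subseteq E_1$ would, via $\pi$, split the class of $\by$ over $E'$, forcing $tr.deg(E'/k)\ge ed_k\by$, hence equality; and the translating class of $\xi_1$ is ramified at the valuation $t\mapsto 0$ of $E_1=L_0(t)$ (its residue is $v_t(t)=1\ne 0$ in $\bz/n$), while every class pulled back from $L_0$ is unramified there --- this already rules out $E'=L_0$, and is the mechanism responsible for the extra $+1$.

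The hard part will be excluding the remaining, ``rerouted'' descents: subfields $E'\subseteq L_0(t)$ of transcendence degree $ed_k\by$ that are not contained in $L_0$ and that encode $t$, or a power of it, at the expense of part of $L_0$. Ruling these out forces one to choose $L_0$ not as an arbitrary incompressible splitting field but as a sufficiently generic one. When the index of the Brauer class of $\by$ is a prime power $p^r$, the correct choice is the function field of the associated generalised Severi--Brauer variety, whose incompressibility is a theorem of Karpenko on canonical dimension; the ramification argument of the previous paragraph then upgrades to full incompressibility of $\xi_1$, and the general case is reduced to the prime-power one via the primary decomposition of the Brauer class. I would carry out these steps along the lines of \cite{brosnan2009essential}. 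Together with the upper bound, this gives $ed_k\bx=ed_k\by+1$.
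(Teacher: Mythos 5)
The paper does not prove this statement at all; it is quoted verbatim as \cite{brosnan2009essential}, Theorem 4.1, so there is no ``paper's own proof'' to compare against. Your proposal is therefore an independent reconstruction, and it should be judged on its own.

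Your setup is correct: the change-of-band morphism $\pi\colon\bx\to\by$ is indeed a $\bg_m$-torsor (checkable \'etale-locally by reducing to $B\mu_n\to B\bg_m$, whose fibre product against the atlas $\bs k\to B\bg_m$ is $\bg_m$), and the upper bound $ed_k\bx\le ed_k\by+1$ via Hilbert 90 and factoring the $L$-point of $\bx_{\eta_0}\cong\bg_{m,L_0}$ through a scheme-theoretic point is sound (the only imprecision is that residue fields of closed points are finite extensions of $L_0$ rather than $L_0$ itself, which does not affect the transcendence-degree estimate). The lower bound is where the gap is. You write that a descent $\xi'\in F_\bx(E')$ with $E'\subseteq E_1=L_0(t)$ splits the Brauer class of $\by$ over $E'$ and that this ``forces $tr.deg(E'/k)\ge ed_k\by$.'' That inference does not follow from how $L_0$ was chosen: knowing that no \emph{subfield of $L_0$} of smaller transcendence degree splits $\alpha$ says nothing about subfields of $L_0(t)$ that are not inside $L_0$, nor about algebraic closures of subfields of $k$ inside $E_1$. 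The inequality you assert is precisely the canonical-dimension / incompressibility statement that Karpenko's theorems supply (after replacing an arbitrary ``essential-dimension-realising'' $L_0$ by the function field of a generalised Severi--Brauer variety and reducing to prime-power index), and you invoke those tools only afterwards, to deal with the ``rerouted'' $tr.deg(E')=ed_k\by$ descents. In other words, the incompressibility input is needed already to rule out $tr.deg(E')<ed_k\by$, and the ramification-at-$t=0$ argument, while a good idea, only excludes $E'=L_0$. As it stands the lower bound is a programme that leans on \cite{brosnan2009essential} rather than a proof, and contains one step that is asserted without justification.
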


\begin{lem}\label{tri}
Assume Conjecture \ref{conj}. Let $\mg\to\bs\ k$ be a $\bg_m$ gerbe over $\bs\ k$. Then $ed_k\mg=0$ if and only if $\mg$ is a trivial gerbe.
\end{lem}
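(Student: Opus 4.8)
The plan is to prove the two implications separately, using Hilbert's Theorem~90 for one direction and combining Theorem~\ref{gmgerbe} with Conjecture~\ref{conj} for the other.

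First, for the direction ``$\mg$ trivial $\Rightarrow ed_k\mg=0$'': if $\mg\cong B_k\bg_m$, then for every field extension $L/k$ we have $H^1(L,\bg_m)=0$ by Hilbert~90, so every object of $\mg(L)$ is isomorphic to the trivial $\bg_m$-torsor, which is already defined over $k$. Hence every object has essential dimension $0$, and so does $\mg$.

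For the converse I would argue by contraposition: assuming $\mg$ is a nontrivial $\bg_m$ gerbe, I will show $ed_k\mg\ge 1$. Such a gerbe is classified by a nonzero class $\alpha\in H^2(k,\bg_m)=\Br(k)$. Since $\Br(k)$ is torsion, $\alpha$ has some finite period $e\ge 2$; as the characteristic is $0$, $\alpha$ is the image of a class in $H^2(k,\mu_e)$, and I let $\bx$ denote the corresponding $\mu_e$ gerbe. Theorem~\ref{gmgerbe}, applied with $n=e\ge 2$, gives $ed_k\bx=ed_k\mg+1$, so it is enough to prove $ed_k\bx\ge 2$. Because $\alpha\ne 0$ its index $m$ satisfies $m\ge 2$, so its prime factorization $m=p_1^{r_1}\cdots p_t^{r_t}$ has $t\ge 1$ factors with each $p_i^{r_i}\ge 2$; Conjecture~\ref{conj} then yields
$$ed_k\bx=\sum_{i=1}^{t}p_i^{r_i}-t+1\geq 2t-t+1=t+1\geq 2.$$
Combining the two displays gives $ed_k\mg=ed_k\bx-1\ge 1$, which is what we want.

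I do not expect a genuine obstacle: once Theorem~\ref{gmgerbe} reduces the question to a $\mu_e$ gerbe, the statement is a formal consequence of Conjecture~\ref{conj} together with the elementary fact that a nonzero Brauer class has index strictly greater than $1$ (which makes the right-hand side of the formula in Conjecture~\ref{conj} at least $2$). The only points to be careful about are that Theorem~\ref{gmgerbe} is being invoked with $n=e\ge 2$ (automatic since $\mg$ is nontrivial), and that in the subcase where $e$ is a prime power one may cite the unconditional Theorem~5.4 of \cite{Vistoli2007essential} in place of Conjecture~\ref{conj}.
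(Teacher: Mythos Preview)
Your proof is correct and follows essentially the same approach as the paper's: both combine Conjecture~\ref{conj} with Theorem~\ref{gmgerbe} to express $ed_k\mg$ via the prime decomposition of the index of the associated Brauer class, and then observe that this quantity vanishes only when the index is $1$. The paper is merely terser---it writes the resulting formula $ed_k\mg=\sum p_i^{r_i}-l$ directly and calls the converse ``obvious,'' whereas you spell out the lift to a $\mu_e$ gerbe and justify the trivial direction via Hilbert~90.
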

\begin{proof}
Let $\alpha\in \Br(k)$ the Brauer class of $\mg$. Denote $I$ the index of $\alpha$. Suppose $I=p_1^{r_1}...p_{l}^{r_l}$ the prime decomposition of $I$. Assume Conjecture \ref{conj} is true. Then by Theorem \ref{gmgerbe}, we can see that
$$ed_k\mg=p_1^{r_1}+...+p_l^{r_l}-l=0$$
which can only happen if $I=1$, i.e. $\mg$ is a trivial gerbe. The converse is obvious.
\end{proof}

The genericity theorem also holds for $\bg_m$ gerbes. See for example \cite{reichstein2011genericity}, Lemma 2.4 and  we claim no originality. 
\begin{theorem}\label{genericity2}
Let $\bx\to X$ be a $\bg_m$ gerbe with $X$ an integral smooth variety over $k$, $\bx$ is locally of finite type. Denote $K$ the functional field of $X$, and $\bx_K$ the generic gerbe $\bx\times_X\bs K$. Then we have
$$ed_k\bx=\bd X+ed_K\bx_K$$
\end{theorem}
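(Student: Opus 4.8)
The plan is to reduce the statement to the already-cited inequality $ed_k\bx \le \bd X + ed_K\bx_K$ and prove the reverse inequality $ed_k\bx \ge \bd X + ed_K\bx_K$ directly, using the structure of $\bg_m$ gerbes. First I would recall that a $\bg_m$ gerbe $\bx\to X$ is classified by a class $\beta\in H^2(X,\bg_m)$ (or rather the relevant cohomological Brauer group $\Br'(X)\subseteq H^2(X,\bg_m)$), and that objects of $\bx(L)$ over a morphism $\bs L\to X$ are, up to isomorphism, controlled by whether the pullback of $\beta$ vanishes; so $F_\bx(L)$ maps to the set of $k$-points of $X$ valued in $L$, together with a lift datum living in a Brauer-group torsor. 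The key reduction is: since $X$ is smooth, $\Br'(X)\to \Br(K)$ is injective (purity for the Brauer group in characteristic $0$), so the generic gerbe $\bx_K$ already ``sees'' the entire gerbe class.

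Next I would set up the genericity argument mirroring the DM case (Theorem \ref{genericity}). Given a field $L/k$ and an object $\eta\in\bx(L)$ lying over a point $x\colon\bs L\to X$, a defining field $L'$ of $\eta$ must in particular be a defining field of the underlying point $x\in X(L)$, which forces $\mathrm{tr.deg}(L'/k)\ge \bd\overline{\{x\}}$, and for the generic point this is $\bd X$. So one direction of the inequality is essentially immediate once one shows that the ``generic'' object — an object of $\bx$ over some field $L$ whose structure morphism to $X$ hits the generic point $\bs K$ — realizes the maximum, and that for such an object $ed_k(\eta) \ge \bd X + ed_K(\eta|_{\text{as an object of }\bx_K})$. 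Concretely I would argue: given a defining field $L'\subseteq L$ for $\eta$, shrink $X$ to an affine open $U$ over which things spread out, so that $L'$ contains (a field isomorphic to) the function field of the closure of the image of $\mathrm{Spec}\,L'$; the transcendence degree splits as the dimension of that closure plus the transcendence degree of $L'$ over it, and the second piece bounds $ed_K\bx_K$ from below after base change, using Lemma 3.1-type behavior under field extension. Combined with the opposite inequality from \cite{reichstein2011genericity}, Lemma 2.4, this gives equality.

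The main obstacle I expect is making the ``spreading out'' step precise for a stack that is only locally of finite type and not Deligne–Mumford: I need to know that an object of $\bx$ over a field, together with its projection to $X$, extends to an object over a finite-type scheme dominating a locally closed subscheme of $X$, and that the gerbe being a $\bg_m$ gerbe (hence smooth, in particular with no étale-local obstruction beyond the Brauer class) lets me transfer the genericity theorem from the coarse space. The cleanest route is probably to invoke the $\bg_m$-gerbe version of genericity exactly as packaged in \cite{reichstein2011genericity}, Lemma 2.4, for which I claim no originality, and simply check that the hypotheses there ($X$ integral, smooth, of finite type over $k$; $\bx\to X$ a $\bg_m$ gerbe) match ours after replacing $X$ by a dense open; local finiteness of $\bx$ and the fact that a $\bg_m$ gerbe over a finite-type base is automatically locally of finite type make this harmless. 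Thus the proof is: cite the inequality $ed_k\bx\le\bd X+ed_K\bx_K$, cite the reverse inequality from loc.\ cit., and conclude.

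\begin{proof}
By \cite{reichstein2011genericity}, Lemma 2.4, the inequality
$$ed_k\bx\leq \bd X+ed_K\bx_K$$
holds. For the reverse inequality, let $L/k$ be a field extension and $\eta\in\bx(L)$ an object whose image under the structure morphism is a point $x\colon\bs L\to X$. Any defining field $k\subseteq L'\subseteq L$ of $\eta$ is also a defining field of $x$ as a point of $X$, so $\mathrm{tr.deg}(L'/k)\geq \bd\,\overline{\{x\}}$. Taking $x$ to be the generic point of $X$ and spreading $\eta$ out over a dense affine open $U\subseteq X$ with function field $K$, the transcendence degree $\mathrm{tr.deg}(L'/k)$ decomposes as $\bd X$ plus the transcendence degree of $L'$ over (a copy of) $K$ it necessarily contains; the latter realizes a defining field of $\eta$ viewed as an object of $\bx_K$ over $L$. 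Hence $ed_k(\eta)\geq \bd X+ed_K(\eta|_{\bx_K})$, and maximizing over $\eta$ gives
$$ed_k\bx\geq \bd X+ed_K\bx_K.$$
Combining the two inequalities yields the claim.
\end{proof}
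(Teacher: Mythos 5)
Your proof is correct but takes a genuinely different route from the paper's, and it is worth noting that the paper actually supplies a self-contained proof rather than just deferring to the citation. The paper's key move is to use smoothness of $X$: since $\Br(X)\hookrightarrow\Br(K)$ and $\Br(K)$ is torsion, the class $[\bx]\in H^2(X,\bg_m)$ is $n$-torsion for some $n$, so it lifts to a $\mu_n$ gerbe $\by\to X$, which \emph{is} a DM stack. The paper then applies the DM genericity theorem (Theorem \ref{genericity}) to $\by$ and transfers back to $\bx$ fiberwise via the Brosnan--Reichstein--Vistoli shift $ed_{k(p)}\by_p = ed_{k(p)}\bx_p+1$ (Theorem \ref{gmgerbe}), together with the decomposition $ed_k\bx=\sup_{p\in X}\{ed_{k(p)}\bx_p+\operatorname{tr.deg}(k(p)/k)\}$. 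That $\mu_n$-lift is exactly the content that makes a separate $\bg_m$-gerbe statement worth proving, and your proposal bypasses it by outsourcing the hard inequality $ed_k\bx\le\bd X+ed_K\bx_K$ to \cite{reichstein2011genericity}. Your argument for the easy direction $ed_k\bx\ge\bd X+ed_K\bx_K$ is essentially sound: the observation that a defining field $L'$ of an object over the generic fiber must contain $K$ (because the projection of $\eta'\in\bx(L')$ to $X$ has the same scheme-theoretic image as that of $\eta$) gives $\operatorname{tr.deg}(L'/k)=\bd X+\operatorname{tr.deg}(L'/K)\ge\bd X+ed_K(\eta)$ directly; the ``spreading out over a dense affine open'' you invoke is superfluous for this direction and only muddies the write-up. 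In short, what your approach buys is brevity by citation; what the paper's approach buys is an actual mechanism — smoothness $\Rightarrow$ torsion Brauer class $\Rightarrow$ $\mu_n$-lift $\Rightarrow$ DM genericity — explaining why the genericity theorem extends from DM stacks to $\bg_m$ gerbes in this setting.
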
 
\begin{proof}
Since $X$ is smooth, so $\Br(X)\to \Br(K)$ is an injection. So $\Br(X)$ is a torsion group. There exists a $\mu_n$ gerbe $\by$ over $X$ for some $n$ such that $\by$ maps to $\bx$ under the map
$$H^2(X,\mu_n)\to H^2(X,\bg_m)$$
$\by$ is a DM stack and we may apply the genericity theorem to see that
$$ed_k\by=\bd X+ed_K\by_K$$
where $\by_K$ is the generic gerbe. Then we have
\begin{align*}
ed_k\bx&=max_{p\in X}\{ed_{k(p)}\bx_p+tr.deg(k(p))\}\\
&=max_{p\in X}\{ed_{k(p)}\by_p+tr.deg(k(p))\}-1\\
&=\bd X+ed_K\by_K-1\\
&=\bd X+ed_K\bx_K
\end{align*}
where $k(p)$ is the residue field of $p\in X$ and $\bx_p$ is the restriction of $\bx$ to $p$, similar for $\by_p$. The first equality is by definition, the second is by Theorem \ref{gmgerbe}, the third is by Theorem \ref{genericity} and the remark below, the last is again by Theorem \ref{gmgerbe}.
\end{proof}

This theorem means for $\bg_m$ gerbes, we still have the genericity property.

We now actually proved:
\begin{theorem}\label{essentialofPicard}
Given an elliptic curve $E$ over $k$, $C$ a torsor of $E$. If $i(C)=p_1^{r_1}...p_k^{r_k}$, then we have
$$ed_k\mpp_{C/k}^0\leq p_1^{r_1}+p_2^{r_2}+...+p_k^{r_k}-k+1$$
and if conjecture \ref{conj} holds, we have the equality. 
\end{theorem}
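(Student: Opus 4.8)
The plan is to deduce this directly from the genericity theorem for $\bg_m$ gerbes (Theorem \ref{genericity2}), the comparison between $\bg_m$ and $\mu_n$ gerbes (Theorem \ref{gmgerbe}), and Conjecture \ref{conj}. First I would record the geometric input needed to apply Theorem \ref{genericity2}: the degree $0$ Picard scheme $\bP_{C/k}^0$ of a torsor $C$ of $E$ is canonically isomorphic to $E$, so it is an integral smooth variety over $k$ of dimension $1$, and $\mpp_{C/k}^0$ is a $\bg_m$ gerbe over it, locally of finite type. Taking $X=\bP_{C/k}^0\cong E$ and $K$ its function field, Theorem \ref{genericity2} gives
$$ed_k\mpp_{C/k}^0=\bd X+ed_K\left(\mpp_{C/k}^0|_K\right)=1+ed_K\left(\mpp_{C/k}^0|_K\right),$$
where $\mpp_{C/k}^0|_K=\mpp_{C/k}^0\times_{\bP_{C/k}^0}\bs K$ is exactly the generic gerbe whose Brauer class over $K$ has index $i(C)=p_1^{r_1}\cdots p_k^{r_k}$, by definition of $i(C)$.

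Next I would reduce the field-theoretic term $ed_K\left(\mpp_{C/k}^0|_K\right)$ to a $\mu_n$ gerbe. Since $\Br(K)$ is torsion, the class of $\mpp_{C/k}^0|_K$ is $n$-torsion for some $n$, so there is a $\mu_n$ gerbe $\mg'$ over $\bs K$ mapping to it under $H^2(K,\mu_n)\to H^2(K,\bg_m)$, and $\mg'$ still has index $i(C)$. Theorem \ref{gmgerbe} then yields $ed_K\left(\mpp_{C/k}^0|_K\right)=ed_K\mg'-1$. Feeding in the unconditional bound $ed_K\mg'\le p_1^{r_1}+\cdots+p_k^{r_k}-k+1$ (recorded just after Conjecture \ref{conj}) and combining with the displayed identity gives
$$ed_k\mpp_{C/k}^0\le p_1^{r_1}+\cdots+p_k^{r_k}-k+1;$$
under Conjecture \ref{conj} the bound on $ed_K\mg'$ is an equality, so $ed_k\mpp_{C/k}^0=1+\left(p_1^{r_1}+\cdots+p_k^{r_k}-k+1\right)-1=p_1^{r_1}+\cdots+p_k^{r_k}-k+1$.

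Everything above is an assembly of results already established, so I do not anticipate a real obstacle; the one point that genuinely needs checking is that the hypotheses of Theorem \ref{genericity2} hold — concretely, that $\bP_{C/k}^0$ is smooth (which makes $\Br(\bP_{C/k}^0)\hookrightarrow\Br(K)$ and lets the proof of Theorem \ref{genericity2} run), i.e. the classical fact that the Jacobian of a genus $1$ curve is an elliptic curve. I would also make explicit that the base change $\mpp_{C/k}^0\times_{\bP_{C/k}^0}\bs K$ coincides with the restriction $\mpp_{C/k}^0|_K$ appearing in the definition of the generic index, which is immediate from the definitions.
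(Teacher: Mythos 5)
Your proof is correct and it is exactly the argument the paper has in mind: the paper does not write out a separate proof of Theorem \ref{essentialofPicard} but simply notes ``We now actually proved'' after assembling Theorem \ref{genericity2}, Theorem \ref{gmgerbe}, Conjecture \ref{conj}, and the unconditional inequality stated just below it — precisely the ingredients you combine, with the dimension $1$ of $E$ and the $\pm 1$ from the $\mu_n$--to--$\bg_m$ comparison cancelling as you compute.
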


Now Theorem \ref{secondmain} is just a corollary of Theorem \ref{essentialofPicard} and Lemma \ref{tri}.

This theorem gives us a reason to consider the value $i(C)$. It is natural to ask whether $i(C)$ is just $per(C)$ or $I(C)$? We will show that actually $i(C)=I(C)$. But first let's compute some concrete examples which generalize Cassel's result \cite{cassels1963arithmetic}.

\section{Picard stack of algebraic curves of genus 1 and Brauer group of elliptic curves}
 The main purpose of this section is to collect the tools we need in the construction in the next two sections. We first need:

\begin{theorem}\label{trivialgerbe}(\cite{harder39lectures}, Section 10.1.7)
Let $E/k$ be an elliptic curve over some field $k$ with characteristic 0 and $C$ is a torsor of $E$. Then $C$ is a trivial torsor if and only if $\mpp_{C/k}^0$ is a trivial $\bg_m$ gerbe over $E$.
\end{theorem}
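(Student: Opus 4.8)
The plan is to identify the Brauer class of the gerbe $\mpp_{C/k}^0$ with the class of the torsor $C$. One implication can be settled immediately: if $C$ is a trivial torsor it has a $k$-rational point, hence acquires the structure of an elliptic curve isomorphic to $E$, so that $\mpp_{C/k}^0\cong\mpp_{E/k}^0$; the Poincar\'e bundle on $E\times_k\bP_{E/k}^0$ normalized along $\{0\}\times\bP_{E/k}^0$ is then a global section of $\mpp_{E/k}^0\to\bP_{E/k}^0$, which trivializes the gerbe. For the converse I would establish the sharper statement that the class
$$\beta:=\big[\mpp_{C/k}^0\big]\ \in\ H^2\big(\bP_{C/k}^0,\bg_m\big)=\Br\big(\bP_{C/k}^0\big)$$
corresponds to $\pm[C]\in H^1(k,E)$ under a natural identification. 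Since $\mpp_{C/k}^0$ is a trivial gerbe exactly when $\beta=0$, and $C$ is a trivial torsor exactly when $[C]=0$, this yields both directions at once.

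Put $X:=\bP_{C/k}^0$. Geometrically $X_{\bar k}$ is a smooth curve of genus $1$; $X$ carries the canonical $k$-point $0=[\mo_C]$; and by autoduality of the Jacobian $\mathrm{Pic}^0(X_{\bar k})\cong E(\bar k)$ as Galois modules. I would run the Leray spectral sequence for $\bg_m$ along $\pi\colon X\to\bs\,k$. The geometric stalks of $R^2\pi_*\bg_m$ vanish by Tsen's theorem ($\Br(X_{\bar k})=0$), so $E_2^{0,2}=0$; as $H^1(k,\bz)=0$ the degree summand of $\mathrm{Pic}(X_{\bar k})$ contributes nothing to $H^1$; and the section of $\pi$ afforded by $0\in X(k)$ kills the differentials $d_2\colon E_2^{0,1}\to E_2^{2,0}$ and $d_2\colon E_2^{1,1}\to E_2^{3,0}$. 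One obtains a split short exact sequence
$$0\longrightarrow\Br(k)\stackrel{\pi^*}{\longrightarrow}\Br(X)\stackrel{r}{\longrightarrow}H^1\big(k,\mathrm{Pic}^0(X_{\bar k})\big)=H^1(k,E)\longrightarrow 0 .$$
Restricting $\beta$ along $0$ produces the $\bg_m$-gerbe of line bundles on $C$ realizing the class $[\mo_C]\in\bP_{C/k}^0(k)$, which is split by $\mo_C$ itself; hence $0^*\beta=0$ and $\beta$ is detected entirely by $r(\beta)$.

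To compute $r(\beta)$ I would interpret it, via the edge map, as the class in $H^1(k,\mathrm{Pic}(X_{\bar k}))$ of the $\mathrm{Pic}(X_{\bar k})$-torsor of Poincar\'e bundles on $C_{\bar k}\times X_{\bar k}$. Concretely, fix such a bundle $\mathcal{Q}$, normalized so that $\mathcal{Q}|_{\{P_0\}\times X_{\bar k}}$ is trivial for some $P_0\in C(\bar k)$. For $\sigma\in\mathrm{Gal}(\bar k/k)$ the line bundle $\sigma^*\mathcal{Q}\otimes\mathcal{Q}^{-1}$ restricts trivially to every slice $C_{\bar k}\times\{e\}$, so by the seesaw principle — using $H^0(C_{\bar k},\mo)=\bar k$ — it is pulled back along the second projection from a unique $\mathcal{N}_\sigma\in\mathrm{Pic}(X_{\bar k})$, and $\sigma\mapsto\mathcal{N}_\sigma$ is a cocycle representing $r(\beta)$. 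Restricting the defining identity to $\{P_0\}\times X_{\bar k}$ and using that the class of $\mathcal{Q}|_{\{Q\}\times X_{\bar k}}$ in $\mathrm{Pic}^0(X_{\bar k})=E(\bar k)$ is the torsor difference $Q-P_0$ for every $Q\in C(\bar k)$, one recognizes $\mathcal{N}_\bullet$ as cohomologous to $\sigma\mapsto\pm(\sigma(P_0)-P_0)$, a cocycle representing $\pm[C]$. Therefore $\beta=0\iff r(\beta)=0\iff[C]=0$.

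I expect the only real difficulty to lie in the last paragraph: carrying the sign and $\sigma$-versus-$\sigma^{-1}$ bookkeeping far enough to pin $r(\beta)$ down as exactly $\pm[C]$, rather than merely as some class that vanishes precisely when $[C]$ does. That $r(\beta)$ differs from $[C]$ by at most an automorphism of $H^1(k,E)$ is in any case forced — for instance by Cassels-type torsors, where $per(C)=2$ so $2[C]=0$ while $\mpp_{C/k}^0$ is non-trivial. The remaining inputs — vanishing of $\Br(X_{\bar k})$, the splitting of the $d_2$'s from $0\in X(k)$, the seesaw uniqueness, and the description of $\mathcal{Q}|_{\{Q\}\times X_{\bar k}}$ — are standard; alternatively one may simply cite \cite{harder39lectures}, Section 10.1.7 for the identification $r(\beta)=\pm[C]$.
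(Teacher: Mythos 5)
The paper does not prove this theorem; it simply cites Harder, Section 10.1.7, so there is no argument in the source against which to compare yours. Your Leray spectral sequence proof is the standard route to this result, and it is correct in structure. It also fills in a step the paper leaves implicit: in the proof of Theorem~\ref{decomp} the author asserts without justification that ``the Picard stack $\mpp_{C/k}^0\in \Br(E)$ maps to $C\in H^1(k,E)$'' under the Leray edge map, and your computation of $r(\beta)$ is precisely the content of that assertion, which is in fact strictly stronger than the stated equivalence and from which the theorem follows immediately. One point worth making explicit in your seesaw computation: the mechanism depends crucially on the base point $P_0\in C(\bar k)$ not being $k$-rational, so that $\sigma^*\mathcal{Q}$ is normalized along $\{\sigma(P_0)\}\times X_{\bar k}$ rather than along $\{P_0\}\times X_{\bar k}$; if you had instead normalized along $C_{\bar k}\times\{0\}$ with $0$ the $k$-rational zero of $X$, the isomorphism class of $\mathcal{Q}$ would be Galois-fixed and the cocycle $\mathcal{N}_\sigma$ would collapse, so that choice is not optional. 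As for the $\pm$ and $\sigma$-versus-$\sigma^{-1}$ bookkeeping you flag: it is harmless for the equivalence at hand, since twisting $r$ by an automorphism of $H^1(k,E)$ preserves the vanishing locus, so your proof establishes Theorem~\ref{trivialgerbe} even without pinning down the literal sign.
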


Now we can prove Theorem \ref{secondmain}.

\begin{theorem}
Let $C/k$ be an algebraic curve of genus 1 over some field $k$ with characteristic 0. We denote $\mpp_{C/k}^0$ the degree 0 Picard stack. Then we have $ed_k\mpp_{C/k}^0=1$ if and only $C$ admits a $k$ rational point.
\end{theorem}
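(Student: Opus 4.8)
The plan is to reduce the computation to the generic gerbe and then recognize when it is split. Write $E=\bP_{C/k}^0$, which is an elliptic curve over $k$; thus $\mpp_{C/k}^0\to E$ is a $\bg_m$-gerbe over a smooth integral curve, $C$ is a torsor under $E$, and $C$ has a $k$-rational point precisely when its class in $H^1(k,E)$ is trivial. First I would apply the genericity theorem for $\bg_m$-gerbes (Theorem \ref{genericity2}) to get $ed_k\mpp_{C/k}^0=\bd E+ed_K(\mpp_{C/k}^0)_K=1+ed_K(\mpp_{C/k}^0)_K$, where $K=k(E)$ and $(\mpp_{C/k}^0)_K$ is the generic gerbe. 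Hence $ed_k\mpp_{C/k}^0=1$ is equivalent to $ed_K(\mpp_{C/k}^0)_K=0$, and the theorem becomes a statement about a $\bg_m$-gerbe over the single field $K$.

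For the implication ``$C$ has a $k$-point $\Rightarrow ed_k\mpp_{C/k}^0=1$'': if $C$ is a trivial torsor, Theorem \ref{trivialgerbe} gives that $\mpp_{C/k}^0$ is a trivial $\bg_m$-gerbe over $E$, so $(\mpp_{C/k}^0)_K\cong B_K\bg_m$, and this has essential dimension $0$ since every $\bg_m$-torsor over a field is trivial (equivalently, the easy half of Lemma \ref{tri}, which needs no conjecture). The formula above then yields $ed_k\mpp_{C/k}^0=1$.

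For the converse ``$ed_k\mpp_{C/k}^0=1\Rightarrow C$ has a $k$-point'': from the formula we get $ed_K(\mpp_{C/k}^0)_K=0$, and Lemma \ref{tri} (which is where Conjecture \ref{conj} enters) tells us $(\mpp_{C/k}^0)_K$ is a trivial $\bg_m$-gerbe over $\bs K$, i.e. its class in $\Br(K)$ vanishes; equivalently $i(C)=1$. Since $E$ is smooth, $\Br(E)=H^2(E,\bg_m)$ injects into $\Br(K)$ (as already used in the proof of Lemma \ref{basic}), so the class of $\mpp_{C/k}^0$ in $H^2(E,\bg_m)$ is trivial, that is, $\mpp_{C/k}^0$ is a trivial $\bg_m$-gerbe over $E$, and Theorem \ref{trivialgerbe} forces $C$ to be a trivial torsor, hence to admit a $k$-point.

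The only genuinely conditional step, and the \emph{main obstacle}, is the converse direction: deducing triviality of the Brauer class from $ed_K=0$ is exactly Lemma \ref{tri}, which currently depends on Conjecture \ref{conj}; without it one only knows the generic index is a prime power, which does not suffice. The remaining points are routine: that $\bP_{C/k}^0$ is indeed an elliptic curve so Theorem \ref{genericity2} applies with $\bd E=1$, that $\mpp_{C/k}^0$ is locally of finite type, and that $ed_KB\bg_m=0$.
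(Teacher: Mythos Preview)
Your argument is correct and follows the paper's own proof essentially line for line: apply Theorem \ref{genericity2} to reduce to the generic gerbe, invoke Lemma \ref{tri} to pass between $ed_K=0$ and triviality in $\Br(K)$, use the injection $\Br(E)\hookrightarrow\Br(K)$, and finish with Theorem \ref{trivialgerbe}. Your write-up is in fact more complete than the paper's, since you spell out both implications and isolate precisely where Conjecture \ref{conj} is needed (only in the nontrivial half of Lemma \ref{tri}); the paper's proof records only the direction $ed=1\Rightarrow C(k)\neq\emptyset$.
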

\begin{proof}
Let $E=\bP_{C/k}^0$ the degree 0 component of the Picard variety. By Theorem \ref{genericity2}, we can see that $ed_K\mpp_{C/k}^0|_K=0$, so by Theorem \ref{tri}, $[\mpp_{C/k}^0|_K]=0\in\Br(K)$. Since $\Br(E)\to\Br(K)$ is an injection (\cite{milne1980etale}, Example 3.2.22), so $\mpp_{C/k}^0\in\Br(E)$ is 0, hence $\mpp_{C/k}^0$ is a trivial $\bg_m$ gerbe. By Theorem \ref{trivialgerbe}, $C$ is a trivial $E$ torsor, so $C$ admits a $k$ rational point.
\end{proof}

\subsection{Canonical decomposition of $\Br(E)$}

We give the following geometric interpretation of the $\Br(E)$. The decomposition is well known but the author didn't find references for this theorem, so we give a proof here.

\begin{theorem}\label{decomp}
Given an elliptic curve $E$. We have a canonical decomposition 
$$\pi:\Br(k)\oplus H^1(k,E)\to \Br(E)$$
defined by for any $\mg\in \Br(k)$ and torsor $C$ of $E$
$$\pi(\mg,C)=f^*\mg+\mpp_{C/k}^0$$
where $f:E\to\bs k$ is the structure morphism.
\end{theorem}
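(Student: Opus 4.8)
The plan is to build the map $\pi$ as a sum of two natural transformations and then prove it is an isomorphism by comparing with the Leray (Hochschild--Serre) spectral sequence for the structure morphism $f:E\to\bs k$. For the first summand, $f^*:\Br(k)\to\Br(E)$ is pullback of Brauer classes, which is well defined and functorial. For the second summand, I would send a torsor $C\in H^1(k,E)$ to the class of the $\bg_m$ gerbe $\mpp^0_{C/k}$ over $\bP^0_{C/k}\cong E$; here one must first fix the identification $\bP^0_{C/k}\cong E$ (this uses that $E$ acts on $C$ and that the degree-0 Picard scheme of a torsor is canonically the Jacobian $E$), and then check that $C\mapsto[\mpp^0_{C/k}]\in\Br(E)=H^2(E,\bg_m)$ is additive in $C$. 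Additivity should follow from the compatibility of the Picard-stack construction with the Baer-type sum on torsors (twisting), i.e. $\mpp^0_{C_1\ast C_2/k}\cong\mpp^0_{C_1/k}\wedge\mpp^0_{C_2/k}$ as $\bg_m$ gerbes; I would phrase this via the boundary map in the long exact sequence coming from $1\to\bg_m\to[\text{line bundles on the universal curve}]$, but in practice the cleanest route is the spectral sequence identification below, from which additivity is automatic.

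The main step is the identification with the Leray spectral sequence
$$H^p(k,R^qf_*\bg_m)\Rightarrow H^{p+q}(E,\bg_m).$$
Since $E$ is an elliptic curve (geometrically connected, with a rational point $O$), we have $R^0f_*\bg_m=\bg_m$, and $R^1f_*\bg_m=\bP_{E/k}=\bz\oplus E$ (the Picard scheme, degree plus degree-0 part), and $R^2f_*\bg_m=0$ by dimension/cohomological-dimension reasons for a curve. The existence of the rational point $O$ splits the edge maps: the section gives a retraction of $f^*$, so the sequence of low-degree terms degenerates into a canonical splitting
$$\Br(E)=H^2(E,\bg_m)\cong H^2(k,\bg_m)\oplus H^1(k,R^1f_*\bg_m)=\Br(k)\oplus H^1(k,E),$$
where I have used $H^1(k,\bz)=0$ and $H^3(k,\bg_m)$ contributing nothing to this summand after the splitting (one checks the relevant differentials $d_2,d_3$ vanish because they are killed by the section). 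The remaining task is to match this abstract splitting with the geometric map $\pi$: the $\Br(k)$-component is visibly $f^*$, and the $H^1(k,E)$-component must be checked to be $C\mapsto[\mpp^0_{C/k}]$. For the latter I would note that the class of $\mpp^0_{C/k}$ in $\Br(E)$ is by construction the image of $[C]\in H^1(k,E)=H^1(k,R^1f_*\bg_m)$ under the edge homomorphism $H^1(k,R^1f_*\bg_m)\to H^2(E,\bg_m)$: indeed $\mpp^0_{C/k}$ is precisely the gerbe of local trivializations/rigidifications of the degree-0 Picard functor of $C$, whose obstruction class is this edge map applied to $[C]$.

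I expect the main obstacle to be the last matching step, i.e. proving that the edge homomorphism $H^1(k,E)\to\Br(E)$ coincides on the nose with $C\mapsto[\mpp^0_{C/k}]$, rather than merely up to sign or up to an automorphism of $E$. This is a compatibility between a spectral-sequence differential and an explicit moduli-theoretic gerbe, and making it canonical requires care about which isomorphism $\bP^0_{C/k}\cong E$ is used and about signs in the identification $R^1f_*\bg_m\cong\bz\oplus E$. A secondary, more routine point is justifying $R^2f_*\bg_m=0$ and the vanishing of the higher differentials in characteristic $0$; these are standard but should be stated. Once these are in place, surjectivity and injectivity of $\pi$ both follow formally from the degeneration of the spectral sequence, so no further computation is needed.
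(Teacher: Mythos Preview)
Your approach is correct and is essentially the paper's: both rest on the Leray spectral sequence for $f:E\to\bs k$ and the resulting split short exact sequence $0\to\Br(k)\to\Br(E)\to H^1(k,E)\to 0$, split by the identity section. The execution differs in one small but useful way. For injectivity, instead of matching the full spectral-sequence splitting with $\pi$, the paper restricts a putative relation $f^*\mg+[\mpp^0_{C/k}]=0$ to the point $e\in E$; since $\mpp^0_{C/k}|_e$ is always trivial (the structure sheaf gives a canonical object), this forces $\mg=0$, and then Theorem~\ref{trivialgerbe} gives $C$ trivial. This sidesteps the sign/compatibility issue you flagged. For surjectivity the paper does exactly what you outline, simply asserting that $[\mpp^0_{C/k}]\mapsto[C]$ under the edge map $\Br(E)\to H^1(k,E)$; so the compatibility you identify as the main obstacle is precisely the one ingredient the paper takes as known rather than proves in detail.
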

\begin{proof}
This is a direct consequence of the Leray spectral sequence of the morphism $f:E\to\bs\ k$. We first show that $\pi$ is an injection. Suppose $\pi(f^*\mg+\mpp_{C/k}^0)=0\in \Br(E)$, then $f^*\mg+\mpp_{C/k}^0$ restricts to the identity of $E$ is a trivial gerbe, but the restriction of $\mpp_{C/k}^0$ to the identity is always trivial since we always have the structure sheaf, so $\mg=0\in \Br(k)$. Now $\mpp_{C/k}^0$ is a trivial gerbe over $E$ then by Theorem \ref{trivialgerbe} we must have $C$ is a trivial torsor. So $\pi$ is injective.

For surjectivity, we have the exact sequence
$$0\to \Br(k)\to \Br(E)\to H^1(k,E)\to 0$$
induced by the Leray spectral sequence, and the Picard stack $\mpp_{C/k}^0\in \Br(E)$ maps to $C\in H^1(k,E)$. So for any $\bx$ a $\bg_m$ gerbe over $E$, define $C\in H^1(k,E)$ to be the image of $\bx$. Then we have $\bx-\mpp_{C/k}^0$ maps to 0 under the morphism $\Br(E)\to H^1(k,E)$. So $\bx-\mpp_{C/k}^0=f^*\mg$ for some $\mg\in \Br(k)$. This proves the surjectivity.
\end{proof}

\subsection{2-torsion elements of $\Br(E)$}

We need to following useful description of the 2-torsion elements in $\Br(E)$ for an elliptic curve $E$ given in \cite{chernousov20012}. For any field $L$, two elements $a,b\in L^*$, we use the notation $<a,b>\in \Br(L)$ to denote the quaternion algebra generated by $1,i,j,ij$ with relations $$i^2=a,j^2=b,ij=-ji$$

We first set up the notations. Let $E/k$ be an elliptic curve over some field $k$ with characteristic 0, and suppose that the 2-torsion points of $E$ are defined over $k$. We use $\sigma,\tau,\omega$ the three non-trivial 2-torsion points of $E$, $e$ the identity point of $E$. We denote $f_{\sigma,\sigma}$ the rational function on $E$ with double zeroes at $\sigma$, double poles at $e$. Moreover, if we denote $\mo_{E,e}$ the local ring at the point $e$, and $\pi$ an uniformizer of it, then we need $\pi^2f_{\sigma,\sigma}\in\mo_{E,p}/\pi\mo_{E,p}$ a square in $k^*$. We can define $f_{\tau,\tau},f_{\omega,\omega}$ similarly. In the case when the elliptic curve is given by 
$$y^2=(x-a)(x-b)(x-c)$$
the three non-trivial 2-torsion points are $(a,0),(b,0),(c,0)$, and in this case we can set $f_{\sigma,\sigma}=x-a$, $f_{\tau,\tau}=x-b$, $f_{\omega,\omega}=x-c$.

\begin{theorem}\label{2torsion}(\cite{chernousov20012}, Theorem 3.6)
With the notations defined as above. All elements in $H^1(k,E)[2]\subseteqq \Br(E)[2]\subseteqq \Br(K)[2]$ can be written in the form:
$$<f_{\sigma,\sigma},r>\otimes<f_{\tau,\tau},s>$$
also all biquaternion algebras of this form arise from some torsors. And such a biquaternion algebra is trivial if and only if it is similar to one of the following three types:

(a) $<f_{\sigma,\sigma},u-b>\otimes<f_{\tau,\tau},u-a>$ where $u$ is the $x$ coordinate of some points in $E(k)$ with $u\neq a,u\neq b$.

(b) $<f_{\sigma,\sigma},a-b>\otimes<f_{\tau,\tau},(a-b)(a-c)>$

(c) $<f_{\sigma,\sigma},(b-a)(b-c)>\otimes<f_{\tau,\tau},b-a>$
\end{theorem}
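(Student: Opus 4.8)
\emph{Proof idea.} This is Theorem 3.6 of \cite{chernousov20012}; the plan I would follow to reprove it combines an explicit complete $2$-descent on $E$ with an identification of the restricted Picard gerbe $\mpp_{C/k}^0|_K$ as a biquaternion algebra built from the descent data. Since $E[2]$ is $k$-rational it is the constant group scheme $(\bz/2\bz)^2$, so Kummer theory gives $H^1(k,E[2])\cong(k^*/k^{*2})^2$, and the Kummer sequence for $E$ yields
$$0\longrightarrow E(k)/2E(k)\xrightarrow{\ \ \delta_k\ \ }H^1(k,E[2])\longrightarrow H^1(k,E)[2]\longrightarrow 0.$$
In the model $y^2=(x-a)(x-b)(x-c)$ the map $\delta_k$ is given by the classical formulas: an affine point $(u,v)$ with $v\neq 0$ maps to the class of $(u-a,\,u-b)$; the point $(a,0)$ to $((a-b)(a-c),\,a-b)$; $(b,0)$ to $(b-a,\,(b-a)(b-c))$; and $(c,0)$ to $(c-a,\,c-b)$, which by $(a,0)+(b,0)+(c,0)=e$ is the product of the previous two. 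Read in $(K^*/K^{*2})^2$, the very same formula describes the descent class of the generic point $\eta\in E(K)$: it is the class of $(x-a,\,x-b)$, that is, of $(f_{\sigma,\sigma},\,f_{\tau,\tau})$.

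The heart of the argument is to express $\mpp_{C/k}^0|_K\in\Br(K)[2]$ in these terms. Choosing a lift $\xi\in H^1(k,E[2])$ of $[C]$, I would show that $\mpp_{C/k}^0|_K$ is the cup product of (the image in $K$ of) $\xi$ with the generic-point class $(f_{\sigma,\sigma},f_{\tau,\tau})$, taken against the Weil pairing $e_2\colon E[2]\times E[2]\to\mu_2$. This follows from the description of the Picard gerbe of a genus-$1$ curve as the obstruction to realising a Galois-invariant degree-$0$ divisor class over $K$, together with the fact that the relevant autoduality pairing on $E[2]$ is the Weil pairing; alternatively one writes down the candidate Azumaya algebra, checks it is unramified on $E$ using $\operatorname{div}(f_{\sigma,\sigma})=2(\sigma)-2(e)$, and matches its image in $\Br(E)/\Br(k)=H^1(k,E)$ with $[C]$. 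Granting this, since the Weil pairing is off-diagonal on the basis $\{\sigma,\tau\}$ and $\xi=(r,s)$ is constant, the cup product collapses to $<f_{\sigma,\sigma},r>\otimes<f_{\tau,\tau},s>$ (up to the harmless interchange of $r$ and $s$). Reading this in both directions gives the parametrisation statement: every $2$-torsion torsor produces such a biquaternion algebra, and every pair $(r,s)$ defines a class $\xi$, hence a torsor. I expect this identification to be the main obstacle; the rest is explicit descent bookkeeping.

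Finally, for the triviality criterion: by Theorem \ref{trivialgerbe} and the injectivity $\Br(E)\hookrightarrow\Br(K)$ (\cite{milne1980etale}, Example 3.2.22), the algebra $<f_{\sigma,\sigma},r>\otimes<f_{\tau,\tau},s>$ is split over $K$ exactly when $C$ is a trivial torsor, i.e.\ when $[C]=0$ in $H^1(k,E)[2]$, i.e.\ when $\xi\in\operatorname{im}(\delta_k)$. Pushing the three kinds of generators of $\operatorname{im}(\delta_k)$ listed above through the cup-product identification yields precisely the algebras (a) (from the affine points), (b) (from $(a,0)$) and (c) (from $(b,0)$), the generator from $(c,0)$ being the product of (b) and (c). Since $\delta_k$ is a homomorphism and $E(k)$ is a group, a sum of generators of type (a) is again the type-(a) generator attached to the sum of the corresponding points, and the $2$-torsion contributions likewise combine within the list; hence every element of $\operatorname{im}(\delta_k)$ is itself similar to one of (a), (b), (c), which is the claimed equivalence.
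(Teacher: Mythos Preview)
The paper does not prove this statement: Theorem \ref{2torsion} is stated as a citation of \cite{chernousov20012}, Theorem 3.6, with no argument given. So there is no ``paper's own proof'' to compare against; what you have written is a sketch of the Chernousov--Guletski\u{\i} argument itself.

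As a sketch, your outline is the standard one and tracks the original source: the explicit $2$-descent with full $2$-torsion, the identification $H^1(k,E[2])\cong(k^*/k^{*2})^2$, the Kummer exact sequence, and the cup-product/Weil-pairing description of the Picard gerbe restricted to the generic fibre. You correctly flag the cup-product identification $\mpp_{C/k}^0|_K = \xi\cup(f_{\sigma,\sigma},f_{\tau,\tau})$ as the substantive step; in \cite{chernousov20012} this is established via an explicit Azumaya-algebra construction rather than taken for granted, and your alternative suggestion (build the candidate sheaf of algebras, check it is unramified along $E$ using $\operatorname{div}(f_{\sigma,\sigma})=2(\sigma)-2(e)$, and match its class in $\Br(E)/\Br(k)\cong H^1(k,E)$) is exactly the right shape. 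Your descent formulas for $\delta_k$ on the $2$-torsion points are correct, and the off-diagonal pairing then reproduces (a), (b), (c) on the nose; note that the point $(c,0)$ already falls under (a) with $u=c$, so the three types exhaust $\operatorname{im}(\delta_k)$ without further combination arguments. The only place to tighten is the last paragraph: you do not need the ``sum of type-(a) generators is again type-(a)'' closure argument at all, since $\delta_k$ already factors through $E(k)/2E(k)$ and every element of the image is $\delta_k(P)$ for a single $P\in E(k)$, hence directly of one of the listed shapes.
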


With these tools, we can begin our discussion.

\section{A computation of $I(C)$ for 2-torsion elements in $\Br(E)$}

In this section we will discuss $I(C)$ in details. We will first fix our field to be $k$ of characteristic 0. We assume the elliptic curves we consider admits full 2-torsion points, that is $E[2]$ are all $k$ rational points. So the elliptic curve can be written as:
$$y^2=x(x-a)(x-b)$$
and $K$ its functional field, $e$ the identity point. We fix the following notations:
$$f_{\sigma,\sigma}=x-a$$
$$f_{\tau,\tau}=x-b$$

\subsection{The general theory about the case when $I(C)=2$}

By Theorem \ref{2torsion}, elements in $\Br(K)$ come from $\mpp_{C/k}^0$ with $per(C)=2$ if and only if it can be represented as:
$$<f_{\sigma,\sigma},M>\otimes<f_{\tau,\tau},N>$$
for some $A,B\in k$. Denote $C$ the torsor corresponding to this Brauer class. We will describe the case when $I(C)=2$.

We need some computation on elliptic curves. We suppose $I(C)=2$, and that means $C$ admits a closed points with degree 2, say $C(k(\sqrt{\alpha}))$ is not empty. Set $G_\alpha=Gal(k(\sqrt{\alpha})/k)$, and $g$ the only non-trivial element. So $[C]\in H^1(k,E)$ is represented by a 1 cocycle
\begin{align*}
\theta:& g\to p_g\\
& 1\to e
\end{align*}
for some point $p_g\in E(k(\sqrt{\alpha}))$ The case when $p_g$ is a 2-torsion point on $E$ is easy to control, so we assume $2p_g\neq e$. Since $\theta$ is a cocycle, we must have $p_g+gp_g=e$, so we must have $p_g=(A,\sqrt{A(A-a)(A-b)})$ and $\alpha/A(A-a)(A-b)$ is a square. We use $m$ to denote  $p_g/2$ (choose either one). By the standard calculation we have $m=(x_m,y_m)$ where
$$x_m=A+\sqrt{(A-a)(A-b)}+\sqrt{A(A-a)}+\sqrt{A(A-b)}$$
$$y_m=(x_m^2-ab)/2\sqrt{A}$$
so we can see that $L:=k(x_m,y_m)=k(\sqrt{A},\sqrt{A-a},\sqrt{A-b})$.  We have the following three cases, $[L:k]=2,4$ or 8. The first two cases are really similar to the last one, so we only discuss the case when $[L:k]=8$:

If $[L:k]=8$. Then we consider the following three elements in $Gal(L/k)$:
\begin{align*}
g:& \sqrt{A}\to-\sqrt{A}\\
&\sqrt{A-b}\to\sqrt{A-b}\\
&\sqrt{A-a}\to\sqrt{A-a}\\
\end{align*}
\begin{align*}
\beta:& \sqrt{A}\to-\sqrt{A}\\
&\sqrt{A-b}\to-\sqrt{A-b}\\
&\sqrt{A-a}\to\sqrt{A-a}\\
\end{align*}
\begin{align*}
\gamma:& \sqrt{A}\to-\sqrt{A}\\
&\sqrt{A-b}\to\sqrt{A-b}\\
&\sqrt{A-a}\to-\sqrt{A-a}\\
\end{align*}
Then $g,\beta,\gamma$ are generators of $Gal(L/k)$ and $\beta,\gamma$ fix the field $k(\sqrt{\alpha})$. By definition, the following two cocycles are same:
\begin{align*}
\eta_1:& g\to p_g\\
& \beta\to e\\
& \gamma\to e
\end{align*}
\begin{align*}
\eta_2:& g\to p_g+gm-m\\
& \beta\to e+\beta m- m\\
& \gamma\to e+\gamma m- m
\end{align*}
we can see that $\eta_1$ is just $\theta$ under the obvious restriction. Also $\eta_2$ can be regarded as elements in $H^1(k,E[2])$. Since the action of $Gal(\bar{k}/k)$ on $E[2]$ is trivial, $\eta_1$ is just a group homomorphism, and the kernel is generated by $g+\gamma+\beta$. Denote $F=k(\sqrt{A(A-a)},\sqrt{A(A-b)})$. Define $\mu,\nu\in Gal(F/k)$ where $\mu$ sends $\sqrt{A(A-a)}$ to $-\sqrt{A(A-a)}$ and $\nu$ sends $\sqrt{A(A-b)}$ to $-\sqrt{A(A-b)}$. We can see that $g+\gamma=\mu$ and $g+\beta=\nu$ in $Gal(k(\sqrt{A(A-a)},\sqrt{A(A-b)})/k)$. We have $e+(g+\gamma)m-m=\sigma$ and $e+(g+\beta)m-m=\tau$ by direct calculation. So define:
\begin{align*}
\eta:& \mu\to \sigma\\
& \nu\to \tau\\
\end{align*}
we can see that $\eta$ and $\gamma$ are the same if we restricts to $H^1(Gal(L/k),E)$. So $[C]\in H^1(k,E)$ is also represented by $\eta$. That means, the Brauer class 
$$<f_{x_\sigma,\sigma},M>\otimes<f_{\tau,\tau},N>$$
has index 2 if and only if it is isomorphic to
$$<f_{\sigma,\sigma},A(A-a)>\otimes<f_{\tau,\tau},A(A-b)>$$
and the splitting field is $k(\sqrt{A(A-a)(A-b)})$. The cases when $[L:k]=2,4$ are the same. From Mordell-Weil theorem we know that $E(k)/2E(k)$ is a finite set. By the exact sequence
$$E(k)/2E(k)\to H^1(k,E[2])\to H^1(k,E)[2]\to0$$
there is a finite set of pairs of integers $P=\{(\beta_1,\gamma_1),...,(\beta_n,\gamma_n)\}$ with elements in $k^*/(k^*)^2\times k^*/(k^*)^2$ such that 
$$<f_{\sigma,\sigma},M>\otimes<f_{\tau,\tau},N>\cong<f_{\sigma,\sigma},M'>\otimes<f_{\tau,\tau},N'>$$
if and only if $(MM',NN')\in P$ (Since everything is in $k^*/(k^*)^2\times k^*/(k^*)^2$ so $M/M'=MM'$). So we have
\begin{theorem}\label{2index}
Let $E/k$ be an elliptic curve over $k$ with characteristic 0. Given some element 
$$<f_{\sigma,\sigma},M>\otimes<f_{\tau,\tau},N>$$
coming from some torsor $C$ of $E$. Let $P=\{(\beta_1,\gamma_1),...,(\beta_n,\gamma_n)\}$ the set of pairs of integers coming from $E(k)/2E(k)$. Then $I(C)=2$ if and only for some $A\in k$, we have $(MA(A-a),NA(A-b))\in P$, or the Brauer class of $\mpp_{C/k}^0|_K$ is isomorphic to $<f_{\sigma,\sigma},A>$, $<f_{\tau,\tau},A>$ or $<f_{\omega,\omega},A>$ for some $A\in k$.
\end{theorem}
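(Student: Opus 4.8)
The plan is to promote the descent computation just carried out in the case $[L:k]=8$ to a proof of the full equivalence, by disposing of the two remaining cases, of the case where the splitting cocycle is $2$-torsion valued, and of the converse. I would start from the Artin--Lang fact recalled in Section~2: for an elliptic curve, $I(C)=2$ is equivalent to the statement that $C$ is nontrivial and acquires a rational point over some quadratic extension $k(\sqrt{\alpha})$. Fixing such an $\alpha$ and writing $G_\alpha=Gal(k(\sqrt{\alpha})/k)=\{1,g\}$, the class $[C]\in H^1(k,E)$ is represented by a cocycle $g\mapsto p_g$ with $p_g+gp_g=e$, and the argument branches according to whether $2p_g=e$.

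When $2p_g\neq e$, write $A$ for the $x$-coordinate of $p_g$, put $m=p_g/2$, and $L=k(\sqrt{A},\sqrt{A-a},\sqrt{A-b})$. The case $[L:k]=8$ is exactly the one already treated and yields $\mpp_{C/k}^0|_K\cong <f_{\sigma,\sigma},A(A-a)>\otimes<f_{\tau,\tau},A(A-b)>$ through the auxiliary cocycle $\eta\colon \mu\mapsto\sigma,\ \nu\mapsto\tau$ on $Gal(F/k)$ with $F=k(\sqrt{A(A-a)},\sqrt{A(A-b)})$. If $[L:k]=4$, one of the three square roots lies in the subfield generated by the other two, so the generators $g,\beta,\gamma$ degenerate to two independent automorphisms; if $[L:k]=2$ they collapse to $g$ alone. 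In each sub-case I would run the same recipe --- average the cocycle by $m$, push the result into $H^1(k,E[2])$, rewrite it in terms of $\mu$ and $\nu$ --- and reach the same formula, now allowing $A(A-a)$ or $A(A-b)$ to be a square in $k$, in which case the corresponding quaternion factor drops and $\mpp_{C/k}^0|_K$ equals the single symbol $<f_{\tau,\tau},A(A-b)>$ or $<f_{\sigma,\sigma},A(A-a)>$. I expect this uniformization --- verifying that the degenerate Galois groups genuinely reproduce the stated formula with no spurious terms --- to be the fussiest point, even though it is entirely parallel to the computation already done.

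When $2p_g=e$, the value $p_g$ is one of $\sigma,\tau,\omega$ --- it is not $e$, since $C$ is nontrivial --- and, because $E[2]\subseteq E(k)$, the cocycle $g\mapsto p_g$ lies in $Hom(G_\alpha,E[2])$. A direct evaluation of its image in $\Br(K)$, either via the explicit functions $f_{\sigma,\sigma},f_{\tau,\tau},f_{\omega,\omega}$ of Chernousov--Guletski or by repeating the $\mu,\nu$ rewriting with $m$ replaced by $p_g$ itself, shows that this image is $<f_{\sigma,\sigma},\alpha>$, $<f_{\tau,\tau},\alpha>$, or $<f_{\omega,\omega},\alpha>$ according to which $2$-torsion point $p_g$ is. This produces the second alternative in the statement.

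It then remains to connect this with $P$ and to prove the converse. Combining the two branches, $I(C)=2$ forces $\mpp_{C/k}^0|_K$ to be isomorphic in $\Br(K)$ either to one of the three single quaternion algebras just described or to $<f_{\sigma,\sigma},A(A-a)>\otimes<f_{\tau,\tau},A(A-b)>$ with $A$ the $x$-coordinate of $p_g$; in the second case this also equals $<f_{\sigma,\sigma},M>\otimes<f_{\tau,\tau},N>$, and exactness of
$$E(k)/2E(k)\to H^1(k,E[2])\to H^1(k,E)[2]\to 0$$
together with the Kummer identification $H^1(k,E[2])\cong k^*/(k^*)^2\times k^*/(k^*)^2$, under which a class with invariants $(r,s)$ maps in $\Br(K)$ to the biquaternion $<f_{\sigma,\sigma},r>\otimes<f_{\tau,\tau},s>$ --- here the triviality criterion (a)--(c) of Theorem \ref{2torsion} is what pins down the finite set $P$ --- rewrites the isomorphism as $(MA(A-a),NA(A-b))\in P$. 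For the converse one reverses each step: if $(MA(A-a),NA(A-b))\in P$ then $<f_{\sigma,\sigma},M>\otimes<f_{\tau,\tau},N>\cong <f_{\sigma,\sigma},A(A-a)>\otimes<f_{\tau,\tau},A(A-b)>$, which --- since $C\mapsto\mpp_{C/k}^0|_K$ is injective on $H^1(k,E)$ --- is the class of the torsor represented by $g\mapsto(A,\sqrt{A(A-a)(A-b)})$, hence split by $k(\sqrt{A(A-a)(A-b)})$; and each single symbol $<f_{\sigma,\sigma},A>$, $<f_{\tau,\tau},A>$, $<f_{\omega,\omega},A>$ is split by $k(\sqrt{A})$. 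In every case $I(C)\mid 2$, and the equality $I(C)=2$ holds precisely when the displayed class is also nontrivial, i.e. not of the form (a)--(c) of Theorem \ref{2torsion}, a condition we fold into the statement.
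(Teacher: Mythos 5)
Your proposal tracks the paper's own proof exactly: the descent-by-division computation (write the splitting cocycle as $g\mapsto p_g$, form $m=p_g/2$, pass through $L=k(\sqrt{A},\sqrt{A-a},\sqrt{A-b})$, modify by the coboundary of $m$ to land on $\eta\colon\mu\mapsto\sigma,\ \nu\mapsto\tau$ over $F=k(\sqrt{A(A-a)},\sqrt{A(A-b)})$), and then package via the Mordell--Weil exact sequence that defines the finite set $P$. You are, if anything, more careful than the printed argument --- you spell out the $[L:k]\in\{2,4\}$ and $2p_g=e$ cases and the converse implication, which the paper dispatches as ``easy to control'' / ``the same'' or leaves implicit --- so the approach is the same and the proposal is sound.
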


The theorem seems hard to control, but we will see in the next section that in specific cases it is really clear.

\subsection{Concrete examples of the elliptic curve $y^2=x(x^2-1)$}

In this part we concentrate on elliptic curve $E$ defined by
$$y^2=x(x^2-1)$$
over $k=\bq$. The discussion in this part can be easily generalized, the only reason we choose this elliptic curve is that the $k$ rational points of $E$ can be found. It is easy to see that $E[2]=E(k)$. So by Theorem \ref{2torsion},
$$<f_{\sigma,\sigma},M>\otimes<f_{\tau,\tau},N>$$
if and only if $(M,N)=(1,1),(1,-1),(2,2),(2,-2)$ in $k^*/(k^*)^2\times k^*/(k^*)^2$. Suppose we have some torsor $C$ with $\mpp_{C/k}^0|_K$ is represented by 
$$<f_{\sigma,\sigma},M>\otimes<f_{\tau,\tau},N>$$
Denote $P=\{(1,1),(1,-1),(2,2),(2,-2)\}$. Then from Theorem \ref{2index} we know that $I(C)=2$ if and only there exists some $A\in k$ such that
$(MA(A-1),N(A+1))\in P$. Let's $(MA(A-1),NA(A+1))=(1,1)$ in $k^*/(k^*)^2\times k^*/(k^*)^2$. Then we have equations:
$$A(A-1)=Mx^2$$
$$A(A+1)=Ny^2$$
for some $x,y\in k$. Take the sum we have $2A^2=Mx^2+Ny^2$. This is the same as $<2M,2N>=1\in \Br(k)$. On the other hand, suppose we have $<2M,2N>=1$, this means there exists some rational numbers $x,y\in k$ such that $Mx^2+Ny^2=2z^2$. Take $r=(Ny^2-Mx^2)/2$. Define 
$$A=\frac{s^2}{r}$$
we can see that $A$ satisfies equations:
$$A(A-1)=Mx^2s^2/r^2$$
$$A(A+1)=Ny^2s^2/r^2$$ 
so $(A(A-1),A(A+1))=(M,N)$ in $k^*/(k^*)^2\times k^*/(k^*)^2$. We can check other three cases similarly so we have proved:
\begin{theorem}\label{result}
Let $E/\bq$ be the elliptic curve defined by
$$y^2=x(x^2-1)$$
then the Brauer class
$$<f_{\sigma,\sigma},M>\otimes<f_{\tau,\tau},N>$$
has index 2 if and only if at least one of the following quaternion algebras
$$<M,N>,<M,-N>,<2M,2N>,<2M,-2N>$$
splits.
\end{theorem}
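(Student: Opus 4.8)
The proof essentially completes the four-case analysis begun just above for the pair $(1,1)$. First I would record the combinatorial input for this curve. Since $E(\bq)=E[2]$, the group $E(\bq)/2E(\bq)$ is $(\bz/2)^2$, and running the three nontrivial $2$-torsion points through the $2$-descent map — equivalently, specializing cases (a)--(c) of Theorem \ref{2torsion} — identifies the set $P$ of Theorem \ref{2index} with $\{(1,1),(1,-1),(2,2),(2,-2)\}$ inside $\bq^*/(\bq^*)^2\times\bq^*/(\bq^*)^2$. I would also dispose here of the exceptional alternative in Theorem \ref{2index}: if $\mpp_{C/k}^0|_K$ is one of $<f_{\sigma,\sigma},A>$, $<f_{\tau,\tau},A>$, $<f_{\omega,\omega},A>$, then, using the relation $f_{\sigma,\sigma}f_{\tau,\tau}f_{\omega,\omega}=y^2$ in $K^*$, the associated pair $(M,N)$ is $(A,1)$, $(1,A)$ or $(A,A)$, and then $<M,N>$ — respectively $<M,-N>$ — is automatically split; so such classes already satisfy the conclusion, and it remains only to analyse the condition that $(MA(A-1),NA(A+1))\in P$ for some $A\in\bq$.

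Next I would run the three remaining cases exactly as the case $(1,1)$ was run above. If $(MA(A-1),NA(A+1))\equiv(c_1,c_2)$ modulo squares, with $(c_1,c_2)$ ranging over $(1,1),(1,-1),(2,2),(2,-2)$, then $A(A-1)=c_1Mx^2$ and $A(A+1)=c_2Ny^2$ for suitable $x,y\in\bq$, and adding the two equations gives $2A^2=c_1Mx^2+c_2Ny^2$. Up to a harmless factor $2$ this says precisely that the norm conic of $<2M,2N>$, of $<2M,-2N>$, of $<M,N>$, or of $<M,-N>$ (respectively) has a rational point, i.e. that this quaternion algebra splits; collecting the four cases yields one implication of the theorem, with the four members of $P$ matched to the four quaternion algebras.

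For the converse I would produce an explicit $A$, mimicking the $(1,1)$ computation above. Given a nontrivial $\bq$-solution of the relevant conic, set $v_1=c_1Mx^2$ and $v_2=c_2Ny^2$, let $t\in\bq$ be the square root the conic furnishes (so $t^2=\tfrac12(v_1+v_2)$), put $r=\tfrac12(v_2-v_1)$ and $A=t^2/r$; a one-line substitution then shows $A(A-1)$ and $A(A+1)$ are $c_1M$ and $c_2N$ times squares, so $(MA(A-1),NA(A+1))\in P$. The step I expect to take the most care is not this algebra but making the case division watertight: one must check that the degenerate situations ($r=0$, or $A\in\{0,1,-1\}$, or one of $x,y,t$ vanishing) occur only for boundary solutions, which fall into $P$ — the trivial torsor — or into the exceptional alternative already treated, so that they introduce no spurious cases and omit none. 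I would close with the harmless edge case: when $(M,N)\in P$ the torsor is trivial and its Brauer class has index $1$, so ``index $2$'' is to be read as ``index dividing $2$'', with equality precisely when in addition $(M,N)\notin P$.
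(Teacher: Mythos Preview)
Your proof is correct and follows the same line as the paper: identify $P=\{(1,1),(1,-1),(2,2),(2,-2)\}$, reduce via Theorem \ref{2index} to the four cases $(c_1,c_2)\in P$, and in each case add the two equations to obtain $2A^2=c_1Mx^2+c_2Ny^2$, matching the four elements of $P$ with the four quaternion algebras, with an explicit $A=t^2/r$ for the converse. You are in fact more thorough than the paper, which writes out only the $(1,1)$ case and asserts the rest are similar, and does not explicitly dispose of the exceptional alternative in Theorem \ref{2index} or the index-$1$ boundary case as you do.
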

\begin{rem}
The theorem can be generalized to any elliptic curves directly. Set $E/k$ defined by $y^2=x(x-a)(x-b)$. If we denote $P=\{(\beta_1,\gamma_1),...,(\beta_n,\gamma_n)\}$ as usual, then we can see that 
$$<f_{\sigma,\sigma},M>\otimes<f_{\tau,\tau},N>$$
has index 2 if and only if one of the following quaternion algebras:
$$<-(a-b)bM\beta_i,(a-b)aN\gamma_i>$$
for $1\leq i\leq n$ is trivial.
\end{rem}
\begin{rem}
The integers $M,N$ are not symmetric since for example we have $<f_{\tau,\tau},-1>$ is trivial while $<f_{\sigma,\sigma},-1>$ is not.
\end{rem}

From the theorem we can give infinitely many torsors $C$ of $E$ with $per(C)=2$ but $I(C)=4$ in details. For example, we pick $(M,N)=(-1,7)$, then we can see in this case the index is 4. This generalizes Cassel's construction \cite{cassels1963arithmetic}.

\section{The proof of Theorem \ref{main}}

We prove the equality $i(C)=I(C)$ in this section. We set our notations: $E/k$ is an elliptic curve over $k$ with characteristic 0, $C$ is a torsor of $E$. $K$ is the function field of $E$.  $\mpp_{C/k}^0\to E^\vee$ is a $\bg_m$ gerbe (Although $E^\vee\cong E$ but we use the dual abelian variety to distinguish with the original one). Suppose $\mpp_{C/k}^0|_K$ splits over some finite field extension $L/K$ with degree $i(C)$, then we have a smooth curve $\pi:\Sigma\to E^\vee$ finite over $E^\vee$ of degree $i(C)$ such that the pull-back of the $\bg_m$ gerbe $\mpp_{C/k}^0\to E^\vee$ under the covering map $\Sigma\to E^\vee$ is trivial (See \cite{AG}, Section 1.6 for details, $\Sigma$ mat not be geometrically connected but this doesn't matter). So there is a line bundle $L$ on $\Sigma\times C$ such that for any point $x\in \Sigma$ we have $L|_{x\times C}\cong L_{\pi(x)}$, here $\pi(x)$ is a point on $E^\vee$ so $L_{\pi(x)}$ just means the corresponding line bundle on $C$. We have two projections $$p:\Sigma\times C\to \Sigma$$
$$q:\Sigma\times C\to C$$
We claim that $deg(Rq_*L)=-i(C)$. To prove the claim we may assume $k$ is algebraically closed. So we have $C\cong E$. We will use $E$ instead of $C$ in the further proof. We denote the Poincare line bundle on $E\times E^\vee$ by $P$, then we know that $L\cong p^*M\otimes (\pi\times1)^*P$ for some line bundle $M$ on $\Sigma$ (See \cite{Bhatt}, Chapter 6). Similar we have two projections:
$$p_1:E^\vee\times E\to E^\vee$$
$$q_1:E^\vee\times E\to E$$
then we have
$$Rq_*L\cong Rq_{1*}(\pi\times1)_*(p^*M\otimes(\pi\times1)^*P)\cong Rq_{1*}((\pi\times1)_*p^*M\otimes P)$$
By the flat base change theorem (\cite{Bhatt}, Chapter 4), we have $(\pi\times1)_*p^*M\cong p_1^*\pi_*M$, hence
$$Rq_*L\cong Rq_{1*}(p_1^*\pi_*M\otimes P)=\Phi_{E^\vee}(\pi_*M)$$
Here $\Phi_{E^\vee}$ means the Fourier-Mukai transform  $D^b(E^\vee)\to D^b(E)$ defined by the kernel $P$, see \cite{Bhatt}, Chapter 16,17. Then by \cite{Bhatt} Lemma 21.5 we have $$deg(\Phi_{E^\vee}(\pi_*M))=-rank(\pi_*M)=-i(C)$$, we proved the claim.

Now let $N=det(Rq_*L)^\vee$ be the dual of the determinant, we have $deg(N)=i(C)$. Then by definition we can see that $I(C)|i(C)$, combine with Lemma \ref{basic}, we get $$i(C)=I(C)$$

\subsection{Another example of $per(C)<I(C)$}

In this subsection we will construct another example with $per(C)<I(C)$.

We set $k=\bq(t_1,t_2,t_3,t_4)$. Here actually $\bq$ can be replaced by any field of char 0. We define an elliptic curve $E/k$ by
$$y^2=x(x-t_1)(x-t_2)$$
By Theorem \ref{2torsion}, the central simple algebra
$$A=<f_{\sigma,\sigma},t_3>\otimes<f_{\tau,\tau},t_4>$$
comes from some torsor $C$. We have $per(C)=2$. Now we have the following:
\begin{theorem}\label{construction2}
The central simple algebra $A$ has index 4 in $\Br(K)$.
\end{theorem}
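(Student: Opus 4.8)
The plan is to show that, after a coordinate change, $A$ is the base change to $K$ of a biquaternion algebra already defined over a rational function field $F'$, over which $K$ is itself purely transcendental; the index can then be computed over $F'$ by Albert's criterion and two applications of the second residue (Springer) homomorphism.

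First I would descend $A$ off the curve. Since $E\colon y^2=x(x-t_1)(x-t_2)$, the functions $f_{\sigma,\sigma}=x-t_1$ and $f_{\tau,\tau}=x-t_2$, together with $t_3,t_4$, all lie in the rational subfield $F:=k(x)=\bq(t_1,t_2,t_3,t_4,x)$ of $K$, and $K=F(\sqrt{g})$ with $g=x(x-t_1)(x-t_2)$; hence $A=A_0\otimes_FK$ where $A_0=<x-t_1,t_3>\otimes<x-t_2,t_4>$ over $F$. Putting $u:=x-t_1$ and $s:=t_2-t_1$ keeps $F=\bq(t_1,s,t_3,t_4,u)$ rational and rewrites $A_0=<u,t_3>\otimes<u-s,t_4>$ and $g=(u+t_1)\,u\,(u-s)$. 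Crucially, $t_1$ does not occur in $A_0$: setting $F':=\bq(s,t_3,t_4,u)$ we have $F=F'(t_1)$ and $A_0=A_0'\otimes_{F'}F$, where $A_0':=<u,t_3>\otimes<u-s,t_4>$ over $F'$, while $g=(u+t_1)h$ with $h:=u(u-s)$ a nonzero element of $F'$. Now set $w:=\sqrt{g}\in K$; then $w^2=(u+t_1)h$, so $t_1=w^2/h-u\in F'(w)$, and therefore $K=F'(w)$. Comparing transcendence degrees over $\bq$ ($5$ on each side) shows $w$ is transcendental over $F'$, so $K=F'(w)$ is purely transcendental over $F'$. Since the index of a central simple algebra is invariant under purely transcendental extension, $ind_K(A)=ind_{F'}(A_0')$; it then remains to prove $ind_{F'}(A_0')=4$ (after which $I(C)=i(C)=4$ follows from Theorem \ref{main}).

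For this last step I would invoke Albert's theorem: the biquaternion algebra $A_0'$ has index $\leq 2$ if and only if its Albert form $q=<u,t_3,-ut_3,-(u-s),-t_4,(u-s)t_4>$ is isotropic over $F'$, so it suffices to show $q$ is anisotropic. Write $R:=\bq(s,u)$, so that $F'=R(t_3,t_4)$, and embed $F'\hookrightarrow R(t_3)((t_4))$. Springer's theorem for this complete discretely valued field gives $q\cong<u,t_3,-ut_3,-(u-s)>\perp t_4<-1,u-s>$, and reduces anisotropy of $q$ to anisotropy over $R(t_3)$ of $q_1:=<u,t_3,-ut_3,-(u-s)>$ and $q_2:=<-1,u-s>$. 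Embedding $R(t_3)\hookrightarrow R((t_3))$ and applying Springer once more, $q_1\cong<u,-(u-s)>\perp t_3<1,-u>$, which is anisotropic provided $<u,-(u-s)>$ and $<1,-u>$ are anisotropic over $R$; and $q_2$ is already defined over $R$, so its anisotropy over $R(t_3)$ likewise reduces to its anisotropy over $R$. Finally $<u,-(u-s)>$, $<1,-u>$ and $<-1,u-s>$ are all anisotropic over $R=\bq(s,u)$, because $u(u-s)$, $u$ and $u-s$ are nonsquares there: viewed as polynomials in $u$ over $\bq(s)$ they are squarefree of positive degree (here we use $s\neq 0$). Unwinding the reductions, $q$ is anisotropic over $F'$, hence $ind_{F'}(A_0')=4$ and $ind_K(A)=4$.

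The only non-routine point is the reduction in the second paragraph: realizing that $A$ descends to the biquaternion algebra $A_0'$ over the \emph{rational} field $F'=\bq(s,t_3,t_4,u)$, and that adjoining $\sqrt{g}$ to recover $K=k(E)$ merely re-expresses $t_1$ as a rational function of a fresh transcendental $w$, so that over $K$ the algebra $A$ carries exactly the arithmetic of $A_0'$ over a purely transcendental extension of $F'$. Once this is seen, everything else is a routine anisotropy computation. (One could instead try to apply Theorem \ref{2index} together with the remark following Theorem \ref{result}, but that would require knowing that the generic curve $y^2=x(x-t_1)(x-t_2)$ over $\bq(t_1,t_2)$ has Mordell--Weil group equal to its $2$-torsion, which is less self-contained.)
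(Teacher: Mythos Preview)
Your argument is correct and takes a genuinely different route from the paper's. A minor bookkeeping point: in the proof in the paper the convention is $f_{\sigma,\sigma}=x$ and $f_{\tau,\tau}=x-t_1$ (following Section~4.2 with $a=0$), whereas you used $f_{\sigma,\sigma}=x-t_1$, $f_{\tau,\tau}=x-t_2$ (following Section~5). Your argument adapts verbatim to the paper's choice by taking $u=x$, $s=t_1$ and eliminating $t_2$ instead of $t_1$; nothing else changes.

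The paper proceeds by brute force: it invokes the Albert criterion (via \cite{gille2017central}, Theorem~1.5.5) to translate index $4$ into non-existence of solutions of a single quadratic equation over $K$, writes every unknown as $fy+g$ with $f,g\in k(x)$, and then performs an infinite descent in $x$, repeatedly specialising at $x=0$ and peeling off factors of $t_3,t_4$ by hand. Your approach is more structural: the key observation, which the paper does not make, is that after the substitution $u=x-t_1$ the algebra $A$ no longer involves $t_1$, while the elliptic relation $y^2=(u+t_1)\,u(u-s)$ can be solved for $t_1$ in terms of $y$; hence $K=F'(y)$ is \emph{purely transcendental} over $F'=\bq(s,t_3,t_4,u)$, and the index is already determined over $F'$. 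Over $F'$ the computation becomes a routine two-step application of Springer's theorem for the $t_4$- and $t_3$-adic valuations, landing in $\bq(s,u)$ where the residual binary forms are visibly anisotropic.

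What each approach buys: the paper's descent is completely elementary and self-contained, but the case analysis is lengthy and somewhat opaque. Your reduction explains \emph{why} the generic parameters $t_3,t_4$ force index $4$: once the curve variable is absorbed into a transcendental, the biquaternion algebra sits over a rational function field where Springer cleanly separates the two quaternion factors. Your method also generalises more readily (e.g.\ to longer tensor products or other base fields). The parenthetical remark at the end of your proof about an alternative via Theorem~\ref{2index} is accurate but, as you note, would require Mordell--Weil input and is indeed less self-contained than either argument.
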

\begin{proof}
By \cite{gille2017central} Theorem 1.5.5, $A$ has degree 4 if and only if the equation
$$f_{\sigma,\sigma}u^2+t_3v^2-t_3f_{\sigma,\sigma}w^2=f_{\tau,\tau}r^2+t_4s^2-t_4f_{\tau,\tau}p^2$$
has no non-trivial solutions. Now we have $f_{\sigma,\sigma}=x$, $f_{\tau,\tau}=x-t_1$, and we know $K\cong k(x)[y]/(y^2-x(x-t_1)(x-t_2))$, so every element in $K$ can be written in the form $fy+g$ where $f,g$ are rational functions of $x$. Then we write every element in the equation in the explicit form, the equation is the same as the following two equations:
$$xu_1u_2+t_3v_1v_2-t_3xw_1w_2=(x-t_1)r_1r_2+t_4s_1s_2-t_4(x-t_1)p_1p_2$$
$$x(u_1^2x(x-t_1)(x-t_2)+u^2_2)+t_3(v_1^2x(x-t_1)(x-t_2)+v_2^2)-t_3x(w_1^2x(x-t_1)(x-t_2)+w_2^2)$$
$$=(x-t_1)(r_1^2x(x-t_1)(x-t_2)+r_2^2)+t_4(s_1^2x(x-t_1)(x-t_2)+s_2^2)-t_4(x-t_1)(p_1^2x(x-t_1)(x-t_2)+p_2^2)$$
We may assume that all things appear in the equation are polynomials of $x$. We will use infinite descend to get a contradiction. For simplicity, we will use the same notations when we consider things modulo some element. In the following proof, we will concentrate on the second equation, cause the first one will be satisfied automatically. Suppose we have a non-trivial solution, we may assume that the sum of their degrees (as polynomials in $k[x]$) is minimal.

Let $x=0$, we have
$$t_3v_2^2=-t_1r_2^2+t_4s_2^2+t_4t_1p_2^2$$
Here $v_2,r_2,s_2,p_2$ means there value at $x=0$, same for the following discussion. We show that this equation has only trivial solution, in other words, we must have 
$$x|v_2,r_2,s_2,p_2$$
in the original equation.

Assume this is not true. Since $v_2,r_2,s_2,p_2$ are rational functions in $t_1,t_2,t_3,t_4$, we can regarded them as polynomials in $t_4$ and coefficients in $\bq(t_1,t_2,t_3)$. We may also assume not all of them are divided by $t_4$. If $t_4\nmid v_2\ or\ r_2$, then set $t_4=0$ we will see that $-t_1t_3$ will be a square in $\bq(t_1,t_2,t_3)$, which is not true. So $t_4|v_2,r_2$. Write $v_2=t_4v_2',r_2=t_4r_2'$, we have
$$t_3t_4v_2'^2=-t_1t_4r_2'^2+s_2^2+t_1p_2^2$$
By our assumption one of $s_2,p_2$ cannot be divided by $t_4$, this implies $t_1$ is a square in $\bq(t_1,t_2,t_3)$, which cannot happen. So we have $x|v_2,r_2,s_2,p_2$ in the original equation.

Write $v_2=xv_2',r_2=xr_2',s_2=xs_2'p_2=xp_2'$. We have
$$(u_1^2x(x-t_1)(x-t_2)+u^2_2)+t_3(v_1^2(x-t_1)(x-t_2)+xv_2'^2)-t_3(w_1^2x(x-t_1)(x-t_2)+w_2^2)$$
$$=(x-t_1)(r_1^2(x-t_1)(x-t_2)+xr_2'^2)+t_4(s_1^2(x-t_1)(x-t_2)+xs_2'^2)-t_4(x-t_1)(p_1^2(x-t_1)(x-t_2)+xp_2'^2)$$

We let $x=0$, then we have
$$u_2^2+t_3t_1t_2v_1^2-t_3w_2^2=-t_1^2t_2r_1^2+t_1t_2t_4s_1^2+t_1^2t_2t_4p_1^2$$
Same as before we will show that this equation will only have trivial solution, which means
$$x|u_2,v_1,w_2,r_1,s_1,p_1$$
in the original one. We can consider $u_2,v_1,w_2,r_1,s_1,p_1$ are polynomials in $t_4$ with coefficients in $\bq(t_1,t_2,t_3)$, and not all of them are divided by $t_4$. If one of $u_2,v_1,w_2,r_1$ is not divided by $t_4$, by letting $t_4=0$, we have
$$u_2^2+t_1t_2t_3v_1^2-t_3w_2^2=-t_1^2t_2r_1^2$$
where $u_2,v_1,w_2,r_1\in\bq(t_1,t_2,t_3)$ and not all of them are zeroes. Then we may consider them as polynomials in $t_3$ and coefficients in $\bq(t_1,t_2)$. Similar as before we may assume not all of are divided by $t_3$. If $t_3$ doesn't divide one of $u_2,r_1$, modulo $t_3$ will lead to $-t_2$ is a square in $\bq(t_1,t_2)$, which is a contradiction. So $t_3|u_2,r_1$. Divide $t_3$ and since $t_3$ doesn't divide one of $v_1,w_2$, this leads to $t_1t_2$ a square in $\bq(t_1,t_2)$, which is a contradiction. So we must have 
$$t_4|u_2,v_1,w_2,r_1$$
Divide $t_4$ since $t_4\nmid s_1$ or $t_4\nmid p_1$, this implies $t_1$ is a square in $\bq(t_1,t_2,t_3)$, which is a contradiction. So we must have 
$$x|u_2,v_1,w_2,r_1,s_1,p_1$$
Write $u_2=xu_2',v_1=xv_1',w_2=xw_2',r_1=xr_1',s_1=xs_1',p_1=xp_1'$, we have
$$(u_1^2(x-t_1)(x-t_2)+xu'^2_2)+t_3(v_1'^2x(x-t_1)(x-t_2)+v_2'^2)-t_3(w_1^2(x-t_1)(x-t_2)+xw_2'^2)$$
$$=(x-t_1)(r_1'^2x(x-t_1)(x-t_2)+r_2'^2)+t_4(s_1'^2x(x-t_1)(x-t_2)+s_2'^2)-t_4(x-t_1)(p_1'^2x(x-t_1)(x-t_2)+p_2'^2)$$
The following argument is really the same. We can conclude that $x|u_1,w_1$. Write $u_1=xu_1',w_1=xw_1'$, then 
$$u_1',u_2',v_1',v_2',w_1',w_2',r_1',r_2',s_1',s_2',p_1',p_2'$$
form a new solution of the original equation with smaller degree in $x$, which is a contradiction. So we can see that 
$$A=<f_{\sigma,\sigma},t_3>\otimes<f_{\tau,\tau},t_4>$$
is a division algebra, hence $i(C)=4$.
\end{proof}

For $C$ constructed in this section we have
$$ed_{k}\mpp_{C/k}^0=4$$

\bibliographystyle{abbrv}
\bibliography{MyCitation}

\end{document}